\DeclareMathOperator{\Spec}{Spec}
\DeclareMathOperator{\red}{red}
\DeclareMathOperator{\Frac}{Frac}
\DeclareMathOperator{\coker}{coker}
\DeclareMathOperator{\Hom}{Hom}
\DeclareMathOperator{\Aut}{Aut}
\DeclareMathOperator{\ev}{ev}
\DeclareMathOperator{\Mod}{Mod}
\DeclareMathOperator{\Tor}{Tor}
\theoremstyle{plain}
\newtheorem{theorem}[subsubsection]{Theorem}
\newtheorem{proposition}[subsubsection]{Proposition}
\newtheorem{lemma}[subsubsection]{Lemma}
\newtheorem{corollary}[subsubsection]{Corollary}
\theoremstyle{definition}
\newtheorem{definition}[subsubsection]{Definition}
\newtheorem{example}[subsubsection]{Example}
\newtheorem{remark}[subsubsection]{Remark}
\newtheorem{remarks}[subsubsection]{Remarks}
\theoremstyle{definition}
\newcommand{\fM}{\mathfrak{M}}
\newcommand{\cP}{\mathcal{P}}
\newcommand{\fP}{\mathfrak{P}}
\newcommand{\fQ}{\mathfrak{Q}}
\newcommand{\fm}{\mathfrak{m}}
\newcommand{\fn}{\mathfrak{n}}
\newcommand{\fp}{\mathfrak{p}}
\newcommand{\fq}{\mathfrak{q}}
\theoremstyle{plain}
\newenvironment{customthm}[1]
  {\innercustomthm}
  {\endinnercustomthm}
\theoremstyle{plain}
\newenvironment{customprop}[1]
  {\innercustomprop}
  {\endinnercustomprop}
\theoremstyle{plain}
\numberwithin{equation}{subsubsection}
\numberwithin{equation}{subsubsection}
\def\@tocline#1#2#3#4#5#6#7{\relax
  \ifnum #1>\c@tocdepth 
  \else
    \par \addpenalty\@secpenalty\addvspace{#2}%
    \begingroup \hyphenpenalty\@M
    \@ifempty{#4}{%
      \@tempdima\csname r@tocindent\number#1\endcsname\relax
    }{%
      \@tempdima#4\relax
    }%
    \parindent\z@ \leftskip#3\relax \advance\leftskip\@tempdima\relax
    \rightskip\@pnumwidth plus4em \parfillskip-\@pnumwidth
    #5\leavevmode\hskip-\@tempdima
      \ifcase #1
       \or\or \hskip 1em \or \hskip 2em \else \hskip 3em \fi%
      #6\nobreak\relax
    \dotfill\hbox to\@pnumwidth{\@tocpagenum{#7}}\par
    \nobreak
    \endgroup
  \fi}
\title{On some permanence properties of (derived) splinters}
\author{Rankeya Datta}
\address{Department of Mathematics, Statistics and Computer Science\\University
of Illinois at Chicago\\Chicago, IL 60607-7045, USA}
\email{\href{mailto:rankeya@uic.edu}{rankeya@uic.edu}}
\urladdr{\url{https://rankeya.people.uic.edu/}}
\author{Kevin Tucker}
\email{\href{mailto:kftucker@uic.edu}{kftucker@uic.edu}}
\urladdr{\url{https://kftucker.people.uic.edu/}}
\thanks{The second author was supported in part by NSF grant DMS \#1602070 and \#1707661, and by a fellowship from the Sloan Foundation.}
\begin{document}

\maketitle

\begin{abstract}
We show that Noetherian splinters ascend under essentially \'etale homomorphisms. Along the way, we also prove that the Henselization of a Noetherian local splinter is always a splinter and that the completion of a local splinter with geometrically regular formal fibers is a splinter. Finally, we give an example of a (non-excellent) Gorenstein local splinter with mild singularities whose completion is not a splinter. Our results provide evidence for a strengthening of the direct summand theorem, namely that regular maps preserve the splinter property.
\end{abstract}



\section{Introduction}
Recall that a {Noetherian} ring $R$ is a \emph{splinter} if any finite ring map $R \rightarrow S$ that induces a surjection on $\Spec$ has a left-inverse in the category of $R$-modules \cite{Ma88}. Perhaps owing to their simple definition, basic questions about splinters are often devilishly difficult to answer. For example, Hochster's direct summand conjecture (now a theorem) is the modest assertion that a regular ring of any characteristic is a splinter. However, it took the advent of perfectoid geometry for this conjecture to be settled by Andr\'e in mixed characteristic \cite{And18(a)} (see also \cite{Bha18, Hei02}), decades after Hochster's verification of the equal characteristic case using Frobenius techniques \cite{Hoc73(a)}. 

The direct summand theorem justifies thinking of a splinter as a characteristic independent notion of singularity. The goal of this paper is to show that splinters satisfy some basic permanence properties enjoyed by other classes of singularities. Our first main result follows below.


\begin{customthm}{A}
\label{Theorem-A}
Let $\varphi\colon R \rightarrow S$ be an essentially \'etale homomorphism of Noetherian rings of arbitrary characteristic. If $R$ is a splinter, then $S$ is a splinter.
\end{customthm}

\noindent 
Most notions of singularities such as reduced, normal, Gorenstein, complete intersection, Cohen--Macaulay in fact ascend under \emph{regular maps}, that is, flat maps of Noetherian rings with geometrically regular fibers. Theorem \ref{Theorem-A} provides characteristic independent evidence suggesting the same is true for splinters. However, a proof of this stronger result is likely difficult as ascent of splinters under regular maps would imply the direct summand theorem (see Remark \ref{rem:question-1-implies-dsc}(2)). On the other hand, Theorem \ref{Theorem-A} and an application of N\'eron--Popescu desingularization show that the principal remaining difficulty lies in verifying that a polynomial ring over a splinter remains a splinter (Remark \ref{rem:question-1-implies-dsc}(1)).

%

Splinters behave very differently depending on the characteristic of the ring. In equal characteristic $0$, for instance, splinters starkly contrast with a more restrictive derived variant. Following \cite{Bha12}, we say a Noetherian ring $R$ of arbitrary characteristic is a \emph{derived splinter} if for any proper surjective morphism $f: X \rightarrow \Spec(R)$, the induced map $R \rightarrow \textbf{R}\Gamma(X, \mathcal{O}_X)$ splits in the derived category $D(\Mod_R)$. In equal characteristic $0$, splinters have long been known to coincide with normal rings via an argument involving the trace map \cite[Lemma 2]{Hoc73(a)}. On the other hand, a ring essentially of finite type over a field of characteristic $0$ is a derived splinter precisely when it has rational singularities \cite{Kov00} (cf. \cite[Theorem 2.12]{Bha12}). Hence such rings are not only normal but even Cohen--Macaulay.

%

The situation is remarkably different in prime characteristic, where Bhatt showed that splinters coincide with derived splinters \cite[Theorem 1.4]{Bha12}. Furthermore, in this setting splinters are conjecturally equivalent to $F$-regular singularities. $F$-regularity was introduced by Hochster and Huneke in the celebrated theory of tight closure \cite{HH90}, and correspond via standard reduction techniques to the Kawamata log terminal singularities fundamental in complex birational geometry \cite{Smi97, MS97, Har98}. While an $F$-regular singularity is always a splinter (Remark \ref{rem:charp-splinters}(2)), the converse is known in the $\mathbb{Q}$-Gorenstein setting \cite{Sin99} and for prime characteristic rings with finite Frobenius whose anti-canonical ring is finitely generated \cite{CEMS18} (the latter also follows by unpublished work of Singh). The conjectural equivalence of splinters and $F$-regularity would also resolve some important and long--standing localization questions in tight closure theory.  

It is perhaps not surprising that (derived) splinters remain quite mysterious in mixed characteristic, where sophisticated techniques were required to prove the direct summand theorem. Bhatt improved upon Andr\'e's result in \cite{Bha18}, showing that regular rings in mixed characteristic are even derived splinters. Astonishingly, Bhatt's forthcoming work further indicates that splinters and derived splinters also coincide in mixed characteristic \cite{Bha}.

Given the close relationship between splinters and their derived variant, it is  natural to wonder about the the derived analogue of Theorem \ref{Theorem-A}. In equal characteristic zero, this follows from \cite{Kov00} because rational singularities are preserved by essentially \'etale maps (see Corollary \ref{cor:derived-Theorem-A}). In prime and mixed characteristic, Theorem \ref{Theorem-A} also implies the derived analogue by Bhatt's work.

%
%
%
%
%
%
%
%
%
%
%
%

\subsection{Structure of the proof of Theorem \ref{Theorem-A}} We prove Theorem \ref{Theorem-A} by reducing to the case where $\varphi: R \rightarrow S$ is \emph{finite \'etale}, which first necessitates an understanding of how splinters behave under Henselization. To that end, we establish the following result:

\begin{customthm}{B}
\label{Theorem-B}
Let $(R, \fm)$ be a Noetherian local ring. Then $R$ is a splinter if and only if its Henselization $R^h$ is a splinter.
\end{customthm}

\noindent Strict Henselizations of splinters are also splinters (see Corollary \ref{cor:splinter-strict-Henselization}), which in turn implies that Theorems \ref{Theorem-A} and \ref{Theorem-B} are equivalent (Remark \ref{rem:main-theorem-A-B-equivalent}).

Preservation of the splinter property under Henselization raises the natural question of whether the completion of a local splinter is a splinter. In equal characteristic zero this fails by Heitmann's examples of Noetherian local UFDs whose completions are not reduced \cite{Hei93} because splinters are always reduced (Lemma \ref{lem:local-splinters-domain}). Similar examples do not work in prime characteristic where local splinters are always analytically unramified (Remark \ref{rem:charp-splinters}(1)). Nevertheless, we show that there exist positive characteristic local splinters with even Gorenstein $F$-regular singularities whose completions are not splinters (Example \ref{ex:completion-not-splinter}). At the same time, any such example cannot be excellent because we also prove that splinters behave well under completions for local rings that usually arise in arithmetic and geometry. 

\begin{customthm}{C}
\label{Theorem-C}
Let $(R, \fm)$ be a Noetherian local ring such that $R \rightarrow \widehat{R}$ is regular (for example, if $R$ is excellent). Then $R$ is a splinter if and only if $\widehat{R}$ is a splinter.
\end{customthm}

\noindent Theorem \ref{Theorem-C} further corroborates our belief that regular maps should preserve the splinter property.

The proof of Theorem \ref{Theorem-C} uses an ideal-theoretic result of Smith for excellent normal local rings \cite[Proposition 5.10]{Smi94}. In our setting, we are able to establish an analogue of Smith's result for Henselizations of \emph{arbitrary} normal local rings, not just excellent ones. This key result, highlighted below, allows us to prove Theorem \ref{Theorem-B} without any restrictions on formal fibers.

\begin{customprop}{\ref{prop:ideal-identity-hensel}}
Let $(R,\fm)$ be a normal local domain (not necessarily Noetherian), and let $R^h$ and $R^{sh}$ denote its Henselization and strict Henselization respectively. Then $R^h$ and $R^{sh}$ are normal domains, and if $I$ is an ideal of $R$, we have 
$$I(R^h)^+ \cap R = IR^+ \cap R = I(R^{sh})^+ \cap R.$$
\end{customprop}

\noindent Here $R^+$ denotes the \emph{absolute integral closure} of a domain $R$, that is, $R^+$ is the integral closure of $R$ in an algebraic closure of its fraction field. Armed with Proposition \ref{prop:ideal-identity-hensel} and \cite[Proposition 5.10]{Smi94}, Theorems \ref{Theorem-B} and \ref{Theorem-C} then have almost identical proofs.

\subsection{Outline of the paper} We begin Section 2 by discussing some elementary properties of splinters (Subsection \ref{subsec:Basic properties}). We next investigate descent of splinters under pure and, the closely related notion of, cyclically pure maps. Crucial to our comparison of purity versus cyclic purity for splinters is Hochster's notion of approximately Gorenstein rings (Definition \ref{def:approx-Gorenstein}). These are Noetherian rings for which purity and cyclic purity coincide. Splinters are also related to absolute integral closures, a connection hinted at by the aforementioned Proposition \ref{prop:ideal-identity-hensel}. Indeed, it is well-known that in order for a Noetherian domain $R$ to be a splinter, it is necessary and sufficient for $R \rightarrow R^+$ to be a pure map (a.k.a. universally injective map). Thus, we briefly discuss how the absolute integral closure interacts with the notion of a splinter. We end Section \ref{sec:Properties of splinters} by highlighting connections between the splinter condition and notions of singularities defined in prime characteristic via the Frobenius map. In Section \ref{sec:Proof of the main theorems} we prove our main theorems. We end our paper by collecting some basic questions, which, as far as we know, are still open for splinters (Section \ref{sec:open-questions}).

\subsection{Conventions}
All rings in this paper are commutative with identity. Although our results are primarily about Noetherian rings, we often use the absolute integral closure of a domain which is highly non-Noetherian. Diverging from usual practice in commutative algebra, by a `local ring' we mean a ring with a unique maximal ideal which is not necessarily Noetherian. When we want the local ring to be Noetherian, we will explicitly say so. We will sometimes talk about normal rings in a non-Noetherian setting. Recall that an arbitrary commutative ring $R$ is \emph{normal} if for all prime ideals $\fp$ of $R$, $R_\fp$ is a domain which is integrally closed in its fraction field \cite[\href{https://stacks.math.columbia.edu/tag/00GV}{Tag 00GV}]{stacks-project}. A reduced ring $R$ (not necessarily Noetherian) with finitely many minimal primes is normal precisely when it is integrally closed in its total quotient ring, and in this case $R$ decomposes as a finite product of normal domains \cite[\href{https://stacks.math.columbia.edu/tag/030C}{Tag 030C}]{stacks-project}.

\section{Properties of splinters}\label{sec:Properties of splinters}
Throughout this section, $R$ will denote a Noetherian ring of arbitrary characteristic unless otherwise specified. We begin by highlighting some well-known properties of splinters.

\subsection{Basic properties}\label{subsec:Basic properties}
If $R$ is a splinter, then $R$ is reduced because the canonical map $R \twoheadrightarrow R_{\red}$ splits. Note that a reduced ring $R$ is a splinter if and only if every finite extension $R \hookrightarrow S$ splits in the category of $R$-modules. This is because if $R$ is reduced, a finite map $R \rightarrow S$ induces a surjection on $\Spec$ if and only if it is injective.

A local splinter is not just reduced, but even a domain. In the literature this is usually deduced as a consequence of splinters being normal, but we provide a direct elementary proof here.

\begin{lemma}
\label{lem:local-splinters-domain}
A Noetherian local splinter $(R,\fm)$ is a domain. In fact, $R$ is normal.
\end{lemma}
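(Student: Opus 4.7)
The plan is to prove the two claims in sequence: first that $R$ is a domain, and then, using this, that it equals its integral closure inside $\Frac(R)$.

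For the domain assertion, I would apply the splinter property to a single judiciously chosen finite extension. By the discussion preceding the lemma, $R$ is reduced, so if $\fp_1,\ldots,\fp_n$ denote the minimal primes of $R$ then $\bigcap_i \fp_i = 0$, and the natural map
$$\varphi \colon R \hookrightarrow S := \prod_{i=1}^{n} R/\fp_i$$
is a finite injection that is surjective on $\Spec$ (every prime of $R$ contains some $\fp_i$ and is the contraction of the prime of $S$ whose $i$-th coordinate is its image and whose other coordinates are the whole factor). The splinter hypothesis therefore produces an $R$-linear retraction $\sigma \colon S \to R$. The crux is then the claim that, as soon as $n \geq 2$, the standard idempotents $e_i \in S$ all satisfy $\sigma(e_i) \in \fm$: indeed $\fp_i e_i = 0$ in $S$, so $\fp_i\sigma(e_i)=\sigma(\fp_i e_i)=0$, which places $\sigma(e_i) \in (0:_R \fp_i)$; and a short direct verification valid in any reduced ring identifies $(0:_R \fp_i)$ with $\bigcap_{j\neq i}\fp_j$, an ideal contained in $\fm$ whenever $n\geq 2$. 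Summing gives the contradiction $1_R = \sigma(1_S) = \sum_i \sigma(e_i) \in \fm$, so $n=1$.

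For normality, now knowing $R$ is a domain with fraction field $K = \Frac(R)$, I would fix $x \in K$ integral over $R$ and show $x \in R$. The subring $R \hookrightarrow R[x] \subseteq K$ is a finite, hence integral, extension of domains, so lying-over makes it surjective on $\Spec$; the splinter property supplies an $R$-linear retraction $\tau \colon R[x] \to R$. To conclude, I would verify that $\ker(\tau)=0$: any $y \in \ker(\tau) \subseteq K$ may be written $y=a/b$ with $a \in R$ and $b \in R \setminus\{0\}$, and then $by = a \in R$ yields $a = \tau(a) = \tau(by) = b\tau(y) = 0$, forcing $y=0$. Hence $R[x]=R$, so $x \in R$ and $R$ is normal.

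The main obstacle is the first part; the idempotent--annihilator computation is what converts the existence of a single $R$-linear retraction of $\varphi$ into a genuine obstruction to having multiple minimal primes, and it crucially exploits the locality of $R$. Once $R$ is known to be a domain, normality is essentially formal, reducing to the observation that an $R$-linear retraction of a finite birational extension of $R$ must be an isomorphism.
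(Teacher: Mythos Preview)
Your proof is correct and follows essentially the same approach as the paper's. For the domain part, both you and the paper map into $\prod_i R/\fp_i$, use that the idempotents $e_i$ are annihilated by $\fp_i$ to force $\sigma(e_i)\in\fm$, and obtain the contradiction $1\in\fm$; for normality, both split $R\hookrightarrow R[x]$ and argue the inclusion must be an equality, the paper phrasing this as a rank comparison of torsion-free modules while you give an equivalent direct kernel computation.
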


\begin{proof}
Assume $R$ is not a domain. Since $R$ is reduced, it has more than one minimal prime, say $\fp_1, \dots, \fp_n$, where $n \geq 2$. Consider the projection 
\[
\varphi\colon R \rightarrow R/\fp_1 \times \dots \times R/\fp_n.
\]
This is finite and surjective on $\Spec$, and so, it splits as a map of $R$-modules. Let $\phi$ be a left inverse of $\varphi$, and $e_i$ be the standard idempotent of $R/\fp_1 \times \dots \times R/\fp_n$ with a $1$ in the $i$-th spot. Since $e_i$ is annihilated by $\fp_i$ when $R/\fp_1 \times \dots \times R/\fp_n$ is viewed as an $R$-module, it follows that $\phi(e_i) \in \fm$, for all $i$. But then,
\[
1 = \phi(1) = \sum_{i=1}^n \phi(e_i) \in \fm,
\]
which is a contradiction. So $n = 1$, and $R$ is a domain because it is reduced.

We now show $R$ is normal. Let $K = \Frac(R)$, and $a/b \in K$ be integral over $R$. As the module-finite extension
\[
R \hookrightarrow R[a/b]
\]
splits, it must be an isomorphism since $R$ and $R[a/b]$ are torsion-free $R$-modules of the same rank. Thus, $a/b \in R$, proving normality.
\end{proof}

\begin{remark}
\label{reduction-to-domain}
Lemma \ref{lem:local-splinters-domain} implies that a (non-local) splinter decomposes into a finite product of normal domains \cite[\href{https://stacks.math.columbia.edu/tag/030C}{Tag 030C}]{stacks-project}, each of which is easily checked to also be a splinter. Conversely, a finite direct product of splinters is a splinter. Hence most questions about splinters reduce to the domain case.
\end{remark}

The following result shows that the property of being a splinter is local:

\begin{lemma}
\label{lem:splinters-local}
Let $R$ be a Noetherian ring. Then the following are equivalent:
\begin{enumerate}
	\item $R$ is a splinter.
	\item For all prime ideals $\fp \in \Spec(R)$, $R_\fp$ is a splinter.
	\item For all maximal ideals $\fm \in \Spec(R)$, $R_\fm$ is a splinter.
\end{enumerate}
Hence, if $R$ is a splinter, then for any multiplicative set $S \subset R$, $S^{-1}R$ is a splinter.
\end{lemma}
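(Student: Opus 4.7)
The plan is to establish the chain $(1) \Rightarrow (2) \Rightarrow (3) \Rightarrow (1)$. The implication $(2) \Rightarrow (3)$ is immediate, and the final ``Hence'' assertion will follow once the equivalence is proved: every local ring of $S^{-1}R$ is already of the form $R_\fp$ for some $\fp \in \Spec(R)$ disjoint from $S$, so by $(1) \Rightarrow (2)$ those are splinters, and by $(2) \Rightarrow (1)$ applied to $S^{-1}R$ we conclude.

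For $(1) \Rightarrow (2)$, I first reduce to the case where $R$ is a normal domain via Remark \ref{reduction-to-domain}: a splinter decomposes as a finite product of normal domain splinters, and $R_\fp$ is a localization of exactly one of the factors. With $R$ a normal domain, a finite map $\varphi\colon R_\fp \to T$ that is surjective on $\Spec$ is automatically injective (since $R_\fp$ is reduced, by the discussion preceding Lemma \ref{lem:local-splinters-domain}). I then spread $T$ out over $R$. Pick generators $t_1, \dots, t_n$ of $T$ as an $R_\fp$-module, each satisfying a monic polynomial over $R_\fp$. Clearing denominators of the coefficients, there is a single $s \in R \setminus \fp$ such that every $st_i$ is integral over $R$. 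The subring $S_0 := R[st_1, \dots, st_n] \subset T$ is then module-finite over $R$, and $(S_0)_\fp = T$ because $s$ is a unit in $R_\fp$. Since $R \hookrightarrow S_0$ is an integral extension of domains, lying-over gives that $\Spec(S_0) \to \Spec(R)$ is surjective. As $R$ is a splinter, $R \to S_0$ splits, and localizing the splitting at $\fp$ yields the desired splitting of $R_\fp \to T$.

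For $(3) \Rightarrow (1)$, let $\varphi\colon R \to S$ be finite and surjective on $\Spec$. A splitting is the same as a homomorphism $\phi \in \Hom_R(S, R)$ satisfying $\ev_1(\phi) := \phi(1) = 1$, so one must show that the ideal $J := \ev_1\bigl(\Hom_R(S, R)\bigr) \subseteq R$ equals $R$, which can be checked after localizing at each maximal ideal $\fm$. Because $S$ is a finitely presented $R$-module, $\Hom_R(S, R)_\fm \cong \Hom_{R_\fm}(S_\fm, R_\fm)$, and a quick check using the given surjectivity on $\Spec$ shows $R_\fm \to S_\fm$ remains finite and surjective on $\Spec$. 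The splinter hypothesis on $R_\fm$ provides a local splitting, so $1 \in J_\fm$ for every maximal $\fm$, forcing $J = R$.

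I expect the most delicate step to be the spreading-out argument in $(1) \Rightarrow (2)$: finding a single denominator $s$ that works simultaneously for all the generators $t_i$, and arranging that the resulting finite $R$-algebra $S_0$ is also surjective on $\Spec$. The reduction to the normal-domain case is precisely what makes the latter painless via lying-over; without it, one would have to argue more carefully about the kernel of $R \to S_0$ and about which primes of $R$ lie in the image of $\Spec(S_0)$.
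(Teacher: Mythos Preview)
Your proof is correct and follows the same approach as the paper's sketch, filling in the spreading-out and local-checking details that the paper leaves implicit. One minor imprecision: $S_0 \subset T$ need not be a domain (since $T$ itself need not be), but lying-over for an injective integral map $R \hookrightarrow S_0$ still gives surjectivity on $\Spec$, so the argument goes through unchanged.
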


\begin{proof}[Sketch of proof]
For (1) $\Rightarrow$ (2) it suffices to assume $R$ is a domain, and then the result follows by a simple spreading out argument, while (2) $\Rightarrow$ (3) is obvious. Assuming (3), note that it suffices to show that any finite extension $R \hookrightarrow S$ splits. But such a splitting can be checked locally at the maximal ideals where it always holds by the hypothesis of (3).

The final assertion follows from the equivalence of (1)-(3) because localizations of $S^{-1}R$ at its prime ideals coincide with localizations of $R$ at primes that do not intersect $S$. 
\end{proof}

\subsection{Purity and descent of splinters}\label{subsec:Purity and descent of splinters}
This subsection is the technical heart of this paper, as it provides a more tractable criterion for verifying the splinter condition  (Lemma \ref{lem:normal-approx-Gor}). Among applications, we show that splinters descend under a notion that is substantially weaker than faithful flatness (Proposition \ref{prop:splinters-descend-cyc-pur}), which we now introduce.

For any commutative ring $A$, we say that a map of $A$-modules $M \rightarrow N$ is \emph{pure} if for all $A$-modules $P$, the induced map $M \otimes_A P \rightarrow N \otimes_A P$ is injective. Pure maps are sometimes also called \emph{universally injective maps} \cite[\href{https://stacks.math.columbia.edu/tag/058I}{Tag 058I}]{stacks-project}. Closely related to the notion of purity is that of cyclic purity, which may be less familiar to the reader.

\begin{definition}
\label{def:cyclic-purity}
Given a ring $A$, we say that a map of $A$-modules $M \rightarrow N$ is \emph{cyclically pure} if for all cyclic $A$-modules $A/I$, the induced map $M/IM \rightarrow N/IN$ is injective.
\end{definition}

\begin{remark}
\label{rem:cyclic-purity-vs-purity-Frobenius}
Taking the cyclic module to be $A$ itself, it follows that cyclically pure maps are injective. Pure maps are obviously cyclically pure, and it is easy to check that faithfully flat ring maps are pure, hence cyclically pure \cite[Chapter I, $\mathsection 3.5$, Proposition 9]{Bou89}.  Purity and cyclic purity are significantly weaker than faithful flatness. For example, Kunz showed that the Frobenius map of a Noetherian ring of prime characteristic is faithfully flat precisely when the ring is regular \cite{Kun69}. However, non-regular Noetherian rings for which the Frobenius map is pure (equivalently, cyclically pure) are abundant and are at the heart of Frobenius splitting techniques in positive characteristic algebra and geometry. Note
that if $R$ is a Noetherian ring of prime characteristic $p > 0$ for which the Frobenius map
$F : R \rightarrow F_*R$ is cyclically pure, then to show that
$F$ is pure, it suffices to show by \cite[Theorem 2.6]{Hoc77} that
$R$ is approximately Gorenstein (see Definition \ref{def:approx-Gorenstein}). Since purity and approximately
Gorenstein are local properties, and since cyclic purity of a ring map is preserved under
localization, we may assume that $R$ is a Noetherian local ring
with maximal ideal $\frak m$ whose Frobenius map is cyclically pure. Now to show that $R$ is approximately Gorenstein it suffices
to show that the $\frak m$-adic completion $\widehat{R}$ is 
reduced; see
\cite[Theorem (1.7) and Corollary (2.2)(c)]{Hoc77}. 
For the latter, upon 
identifying $\widehat{R}$ with
\[
\varprojlim_{e \in \mathbf{Z}_{\geq 0}} R/\fm^{[p^e]},
\]
assume some element $\alpha = (r_e + m^{[p^e]})_e \in \widehat{R}$ is
nilpotent. We want to show $\alpha = 0$. Choose $f > 0$ such that $\alpha^{p^f} = 0$. Then for all $e \geq 0$, 
$r^{p^f}_{f+e} \in \fm^{[p^{f+e}]} = (\fm^{[p^e]})^{[p^f]}$. The $f$-th iterate
of the Frobenius map $F^f: R \rightarrow F^f_*R$ is cyclically pure because compositions of cyclically pure maps are cyclically pure. It then follows that
$r_{f+e} \in \fm^{[p^e]}$ for all $e \geq 0$. Finally, because 
$r_{f+e} \equiv r_e$ mod 
$\fm^{[p^e]}$, we see that for all $e \geq 0$, $r_e \in \fm^{[p^e]}$, that is,
$\alpha = 0$, as desired.

\end{remark}

A split map of modules is pure, and the converse is true under a mild restriction.

\begin{lemma}\cite[Corollary 5.2]{HR76}
\label{lem:pure-implies-split}
If $\varphi: M \rightarrow N$ is a pure map of $A$-modules whose cokernel is finitely presented, then $\varphi$ splits, that is, it admits a left inverse in $\Mod_A$.
\end{lemma}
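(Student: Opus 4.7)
The plan is to translate splitting into a lifting problem and then invoke the equational characterization of purity. Set $C := \coker(\varphi)$; splitting $\varphi$ is equivalent to producing a section $s \colon C \to N$ of the projection $\pi \colon N \twoheadrightarrow C$ in the short exact sequence
\[
0 \longrightarrow M \xrightarrow{\varphi} N \xrightarrow{\pi} C \longrightarrow 0.
\]

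Since $C$ is finitely presented, I would fix a presentation $A^m \xrightarrow{r} A^n \xrightarrow{\alpha} C \to 0$. Because $A^n$ is free and $\pi$ is surjective, there is a lift $q \colon A^n \to N$ with $\pi q = \alpha$. Then $\pi q r = \alpha r = 0$, so $q r$ factors through $\varphi$, yielding $f \colon A^m \to M$ with $\varphi f = q r$. If I can produce a diagonal fill-in $q' \colon A^n \to M$ satisfying $q' r = f$, then $\tilde q := q - \varphi q'$ still lifts $\alpha$ (that is, $\pi \tilde q = \alpha$) but now satisfies $\tilde q r = 0$, hence descends to a map $s \colon C \to N$ with $\pi s = \mathrm{id}_C$, which is the desired splitting.

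Everything thus reduces to producing the diagonal $q'$. Writing $r$ as a matrix with entries in $A$, the data of $f$ together with $q$ encode a finite system of $A$-linear equations with right-hand sides in $M$ that is solvable in $N$; the existence of $q'$ is exactly the assertion that the system is already solvable in $M$. This is precisely the classical equational characterization of purity of $\varphi$, equivalently, the statement that finitely presented modules are pure projective, which is the content of the Hochster--Roberts reference. The main obstacle is therefore just invoking (or, if preferred, re-deriving) this characterization cleanly from the tensor definition of purity; once it is in hand, the rest of the argument is routine diagram chasing. An alternative route, should a direct citation prove awkward, is to apply purity to the finitely presented module $A^n/r(A^m) \cong C$ and translate the injectivity of $M \otimes_A C \to N \otimes_A C$ into the existence of $q'$ by a bookkeeping argument on generators and relations.
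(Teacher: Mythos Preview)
The paper does not supply its own proof of this lemma; it simply records the statement with a citation to \cite[Corollary 5.2]{HR76} and uses it as a black box elsewhere. So there is no in-paper argument to compare against.

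Your proposed argument is correct and is essentially the standard proof of the Hochster--Roberts result. The reduction to finding the diagonal $q'$ is set up cleanly, and the key step---that the system $q' r = f$, which is solvable in $N$ via $q$, must already be solvable in $M$---is exactly the equational (Cohn--Lazard) characterization of purity. One small comment: the phrase ``equivalently, the statement that finitely presented modules are pure projective'' is a slight overreach in this context, since pure-projectivity of $C$ is precisely what you are proving, not an input; the genuine input is the equational criterion, from which your diagram chase deduces the section. Your alternative route through $M \otimes_A C \hookrightarrow N \otimes_A C$ would also work but requires unwinding the same bookkeeping, so it is not really a shortcut.
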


It is natural to ask when the notions of cyclic purity and purity coincide for ring maps. Hochster discovered a surprising algebraic property that characterizes those Noetherian rings $A$ for which any cyclically pure ring map $A \rightarrow B$ is pure. We now introduce this property.

\begin{definition}\cite[Definitions (1.1) and (1.3)]{Hoc77}
\label{def:approx-Gorenstein}
A Noetherian local ring $(R,\fm)$ is \emph{approximately Gorenstein} if it satisfies the following equivalent conditions:
\begin{enumerate}
	\item[(i)] For every integer $N > 0$, there is an ideal $I \subseteq \fm^N$ such that $R/I$ is Gorenstein.
	\item[(ii)] For every integer $N > 0$, there is an $\fm$-primary irreducible ideal $I \subseteq \fm^N$.
\end{enumerate}
We say a Noetherian ring $R$ (not necessarily local) is \emph{approximately Gorenstein} if $R_\fm$ is an approximately Gorenstein local ring for all maximal ideals $\fm$.
\end{definition} 

\begin{remark}
\label{rem:comment-def-approx-Gor}
{\*}
\begin{enumerate}
\item The key point is that if $R$ is approximately Gorenstein, then a ring homomorphism $R \rightarrow S$ is pure if and only if it is cyclically pure \cite[Theorem 2.6]{Hoc77}.

\item If $(R,\fm)$ is Noetherian local, then for every ideal $I \subseteq \fm^N$ such that $R/I$ is Gorenstein, there exists an $\fm$-primary ideal $J$ such that $I \subseteq J \subseteq \fm^N$ and $R/J$ is Gorenstein. Namely, if $x_1,\dots,x_k \in R$ are such that their images in $R/I$ form a system of parameters of $R/I$, then one can take $J = I + (x^N_1,\dots,x^N_k)$. Note that $J$ is irreducible because the zero ideal of a zero-dimensional Gorenstein ring is irreducible.
\end{enumerate}
\end{remark}

The next lemma is a crucial in our proofs of Theorems \ref{Theorem-A}, \ref{Theorem-B} and \ref{Theorem-C}, and establishes a connection between approximately Gorenstein rings and splinters. Although the result is essentially contained in \cite{Hoc77}, we include a proof for the reader's convenience. 

\begin{lemma}\cite{Hoc77}
\label{lem:normal-approx-Gor}
Let $(R,\fm)$ be a Noetherian normal local ring. Then for any ring map $R \rightarrow S$, the following are equivalent:
\begin{enumerate}
	\item $R \rightarrow S$ is pure.
	\item $R \rightarrow S$ is cyclically pure.
	\item For all $\fm$-primary ideals $I$ of $R$, the induced map $R/I \rightarrow S/IS$ is injective.
	\item There exists a decreasing sequence $\{I_t\}_{t \in \mathbb N}$ of $\fm$-primary ideals of $R$ cofinal with powers of $\fm$ such that for all $t$, $R/I_t$ is a Gorenstein ring, and the induced map $R/I_t \rightarrow S/I_tS$ is injective.
\end{enumerate}
In particular, if $R$ is a splinter, then for any map $R \rightarrow S$ conditions $(1)$-$(4)$ are equivalent.
\end{lemma}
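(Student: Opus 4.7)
The implications (1) $\Rightarrow$ (2) $\Rightarrow$ (3) are formal: the first follows by taking $P = R/I$ in the definition of purity, and (3) is cyclic purity restricted to $\fm$-primary ideals. My plan is to close the cycle via (3) $\Rightarrow$ (4) $\Rightarrow$ (1). The concluding sentence about splinters then follows immediately, since by Lemma \ref{lem:local-splinters-domain} a local splinter is normal, placing it under the hypotheses of the main equivalence.

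For (3) $\Rightarrow$ (4), I would invoke Hochster's theorem that every Noetherian normal local ring is approximately Gorenstein \cite{Hoc77}. Combined with Remark \ref{rem:comment-def-approx-Gor}(2), this delivers a descending cofinal sequence $\{I_t\}$ of $\fm$-primary ideals with each $R/I_t$ Gorenstein, and the required injectivity $R/I_t \to S/I_tS$ is then a special case of (3).

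The heart of the argument is (4) $\Rightarrow$ (1), whose idea is to realize the injective hull $E := E_R(R/\fm)$ as a filtered colimit. Because each $R/I_t$ is a zero-dimensional Gorenstein local ring, the submodule $(0 :_E I_t) \subseteq E$ is an injective hull of $R/\fm$ over $R/I_t$ and is therefore abstractly isomorphic to $R/I_t$. Choosing such isomorphisms compatibly, and using that the $I_t$ are cofinal with powers of $\fm$ so that the $(0 :_E I_t)$ exhaust $E$, one obtains $E \cong \varinjlim_t R/I_t$. Since tensor products commute with filtered colimits, $E \otimes_R S \cong \varinjlim_t S/I_t S$, and the injectivity of each transition $R/I_t \to S/I_t S$ afforded by (4) forces $E \to E \otimes_R S$ to be injective.

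To convert the injectivity of $E \to E \otimes_R S$ into the purity of $R \to S$, recall that over a Noetherian base, purity is equivalent to $M \to M \otimes_R S$ being injective for every finitely generated $M$. For finite length $M$, embed $M \hookrightarrow E^n$ and note that the composite $M \hookrightarrow E^n \hookrightarrow E^n \otimes_R S$ factors through $M \otimes_R S$, forcing $M \to M \otimes_R S$ to be injective. For a general finitely generated $M$ with $0 \neq m \in M$, Krull's intersection theorem furnishes some $n$ with $m \notin \fm^n M$, and applying the finite length case to $M/\fm^n M$ produces a nonzero image of $m \otimes 1$ in $(M \otimes_R S)/\fm^n (M \otimes_R S)$. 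The principal obstacle is the identification $E \cong \varinjlim_t R/I_t$, where the Gorenstein structure of the quotients in (4) plays its essential role; everything else is routine manipulation with filtered colimits and the role of $E$ as an injective cogenerator for finite length modules.
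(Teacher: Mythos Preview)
Your proof is correct and follows the same strategy as the paper: both establish $(3)\Rightarrow(4)$ by appealing to Hochster's result that normal Noetherian local rings are approximately Gorenstein, and both prove $(4)\Rightarrow(1)$ by identifying $(0:_E I_t)\cong R/I_t$ via the zero-dimensional Gorenstein hypothesis and exhausting $E=E_R(R/\fm)$ as the union of these submodules to deduce that $E\to E\otimes_R S$ is injective. The only difference is in the final step: the paper simply invokes \cite[Lemma 2.1(e)]{HH95} for the equivalence ``$R\to S$ pure $\Longleftrightarrow$ $E\to E\otimes_R S$ injective,'' whereas you unpack this by hand---first for finite-length modules via an embedding into $E^n$, then for arbitrary finitely generated modules via Krull's intersection theorem. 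Your argument is thus slightly more self-contained, but otherwise the two proofs coincide.
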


\begin{proof}
The implications $(1) \Rightarrow (2) \Rightarrow (3)$ follow by the definitions of purity and cyclic purity. For $(3) \Rightarrow (4)$ we need to construct a decreasing sequence $\{I_t\}$ of $\fm$-primary ideals cofinal with powers of $\fm$ such that each $R/I_t$ is Gorenstein. For this, it suffices to show by Remark \ref{rem:comment-def-approx-Gor} that a Noetherian normal local ring is approximately Gorenstein. We may assume that $\dim(R) \geq 2$. Otherwise $R$ is a regular local ring, and regular local rings are clearly approximately Gorenstein. 
If $\dim(R) \geq 2$ and $R$ is normal, then $R$ has depth $\geq 2$. Hence the depth of the completion $\widehat{R}$ is also $\geq 2$, and so, $R$ is approximately Gorenstein by \cite[Theorem (5.2)]{Hoc77} because $\widehat{R}$ has no associated primes of coheight $\leq 1$.

It remains to show $(4) \Rightarrow (1)$. Let $E = E_R(R/\fm)$ be the injective hull of the residue field of $R$. Recall that by Matlis duality, $R \rightarrow S$ is pure if and only if the induced map $E \rightarrow E \otimes_R S$ is injective \cite[Lemma 2.1(e)]{HH95}. Since every element of $E$ is annihilated by a power of $\fm$, it follows by the hypotheses on the collection $\{I_t\}$ that
\[
E = \bigcup_t  (0:_E I_t).
\]
Now $(0:_E I_t) \cong \Hom_R(R/I_t, E)$ is an injective $R/I_t$-module by $\Hom$--$\otimes$ adjunction. 
One then checks that $(0:_E I_t)$ is in fact the injective hull of the residue field of $R/I_t$, and hence coincides with the latter zero-dimensional Gorenstein local ring \cite[Theorem A.29]{twentyfour}. Thus $E$ is the union of $R$-modules isomorphic to $R/I_t$, and injectivity of $E \rightarrow E \otimes_R S$ then follows because for every $t$, $R/I_t \rightarrow S/I_tS$ is injective by the hypothesis of (4).
\end{proof}

\begin{remark}
Lemma \ref{lem:normal-approx-Gor} is only stated for a Noetherian normal ring, although its proof holds for any approximately Gorenstein ring. Apart from Noetherian normal rings, the class of approximately Gorenstein rings includes Noetherian local rings that are formally reduced \cite[Theorem (1.7) and Corollary (2.2)(c)]{Hoc77}, excellent reduced rings \cite[Theorem (1.7)]{Hoc77}, and Noetherian local rings of depth at least $2$ \cite[Theorem (1.6)]{Hoc77} because such rings have no
associated primes of coheight $1$ by \cite[\href{https://stacks.math.columbia.edu/tag/0BK4}{Tag 0BK4}]{stacks-project}.
\end{remark}

Lemma \ref{lem:normal-approx-Gor}  can be used to show that splinters descend under cyclically pure maps.

\begin{proposition}
\label{prop:splinters-descend-cyc-pur}
Let $\varphi\colon R \rightarrow S$ be a map of Noetherian rings such that $S$ is splinter.
\begin{enumerate}
	\item If $\varphi$ is pure, then $R$ is a splinter. In particular, splinters descend under faithfully flat maps.	
	\item If $\varphi$ is cyclically pure and maps nonzerodivisors of $R$ to nonzerdivisors of $S$ (for example, if $S$ is a domain or $\varphi$ is flat), then $R$ is a splinter.
\end{enumerate}
\end{proposition}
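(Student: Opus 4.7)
For part (1), my plan is to show that every finite ring map $R \to T$ surjective on $\Spec$ is pure, after which Lemma \ref{lem:pure-implies-split} (applicable because the cokernel of $R \to T$ is finitely generated and $R$ is Noetherian) produces the required splitting. The key idea is to exploit the splinter property of $S$ after base change. I would verify that $S \to T \otimes_R S$ is still surjective on $\Spec$ via a short fiber computation: the fiber over $\fq \in \Spec S$ is $(T/\fp T \otimes_{R/\fp} \kappa(\fp)) \otimes_{\kappa(\fp)} \kappa(\fq)$, where $\fp = \varphi^{-1}(\fq)$, and this is nonzero because $R \to T$ is surjective on $\Spec$ and tensoring a nonzero finite algebra with a field extension remains nonzero. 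Since $S$ is a splinter, $S \to T \otimes_R S$ then splits, hence is pure. Composing with the pure map $\varphi\colon R \to S$ yields purity of $R \to T \otimes_R S$, which forces $R \to T$ to be pure since purity of a composition $A \to B \to C$ passes to the first factor $A \to B$.

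For part (2), the strategy is to reduce to part (1) by upgrading cyclic purity to purity, and the main obstacle will be establishing that $R$ is normal. First, I would note that $R$ is reduced: $\varphi$ is injective (cyclic purity with $I=0$) and $S$ is reduced as a splinter. Next, the nonzerodivisor-preservation hypothesis makes $\varphi$ extend to an injection $Q(R) \hookrightarrow Q(S)$ of total quotient rings. Any element of $Q(R)$ integral over $R$ is then integral over $S$, and so lies in $S$ because $S$, being a splinter, is normal. To conclude that such an element lies in $R$, I would verify the identity $S \cap Q(R) = R$: writing $g \in S \cap Q(R)$ as $a/b$ with $b \in R$ a nonzerodivisor, the relation $g\,\varphi(b) = \varphi(a)$ in $S$ places $a$ in $bS \cap R$, and cyclic purity applied to the ideal $(b)$ gives $bS \cap R = bR$, so $g \in R$.

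With normality in hand, I would localize to finish. By Lemma \ref{lem:splinters-local} it suffices to show that $R_\fm$ is a splinter for each maximal $\fm \subset R$. Each $R_\fm$ is a Noetherian normal local ring, and the localized map $R_\fm \to S \otimes_R R_\fm$ remains cyclically pure by exactness of localization. At this point Lemma \ref{lem:normal-approx-Gor} is precisely the tool required: it upgrades cyclic purity to genuine purity for any map out of a Noetherian normal local ring. Since $S \otimes_R R_\fm$ is a localization of the splinter $S$ and therefore itself a splinter by Lemma \ref{lem:splinters-local}, part (1) applies to give that $R_\fm$ is a splinter. The crucial step is the normality argument, where the nonzerodivisor hypothesis really earns its keep; after that, Lemma \ref{lem:normal-approx-Gor} closes the gap between cyclic purity and purity with no additional effort.
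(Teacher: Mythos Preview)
Your proposal is correct and follows essentially the same approach as the paper: part (1) via base change, the splinter property of $S$, and the factoring trick for purity, then Lemma \ref{lem:pure-implies-split}; part (2) via the same normality argument (nonzerodivisor hypothesis plus cyclic purity of $(b)$) followed by Lemma \ref{lem:normal-approx-Gor} to upgrade cyclic purity to purity and reduce to (1). The only cosmetic differences are that the paper cites EGA for universal surjectivity rather than computing fibers, and applies Lemma \ref{lem:normal-approx-Gor} globally (implicitly via Remark \ref{rem:comment-def-approx-Gor}(1)) rather than localizing first as you do.
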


\begin{proof}
Note that in both cases $R$ is reduced since $S$ is reduced and $\varphi$ is injective. Assertion (2) follows from (1). Indeed, we claim that the hypotheses of (2) imply that $\varphi$ is pure. This follows by Lemma \ref{lem:normal-approx-Gor} if we can show that $R$ is normal. Since $R$ is Noetherian, it suffices for us to show that it is integrally closed in its total quotient ring \cite[\href{https://stacks.math.columbia.edu/tag/030C}{Tag 030C}]{stacks-project}. Note that $S$ is normal since $S$ is a splinter (Lemma \ref{lem:local-splinters-domain}). Let $a/b$ be an element in the total quotient ring of $R$ (hence $b$ is a nonzerodivisor) that is integral over $R$. Then $\varphi(a)/\varphi(b)$ is an element in the total quotient ring of $S$ that is integral over $S$ ($\varphi(b)$ remains a nonzerodivisor in $S$). Therefore $aS \subseteq bS$ since $S$ is normal, and so, $aR = aS \cap R \subseteq bS \cap R = bR$ by cyclic purity of $\varphi$. This shows $a/b \in R$, and so, $R$ is normal. 

For the proof of (1), let $R \rightarrow T$ be a finite map such that the induced map $\Spec(T) \rightarrow \Spec(R)$ is surjective. By base change, $S \rightarrow T \otimes_R S$ has the same properties (a surjective morphism of schemes is `universally surjective' by \cite[Chapter 1, Proposition (3.5.2)(ii)]{EGAI}). Therefore, the map
\[
S \rightarrow T \otimes_R S
\]
splits in the category of $S$-modules. In particular, $S \rightarrow T \otimes_R S$ is a pure map of $S$-modules (hence also $R$-modules). Consider the commutative diagram
\[
\begin{tikzcd}
  R \arrow[r, "\varphi"] \arrow[d]
    & S \arrow[d] \\
  T \arrow[r]
&T \otimes_R S 
\end{tikzcd}
\]
Since the composition $R \xrightarrow{\varphi} S \rightarrow S \otimes_R T$ is pure as a map of $R$-modules, it follows that $R \rightarrow T$ is also pure. But $\coker(R \rightarrow T)$ is a finitely presented $R$-module since $R$ is Noetherian and $R \rightarrow T$ is finite, and so, $R \rightarrow T$ splits by Lemma \ref{lem:pure-implies-split}.
\end{proof}

\subsection{Absolute integral closure and splinters}
Recall that if $R$ is a domain, then its \emph{absolute integral closure}, denoted $R^+$, is the integral closure of $R$ in a fixed algebraic closure of its fraction field. Absolute integral closures allow one to verify the splinter condition by checking cyclic purity of a \emph{single} map, as highlighted in the following result.

\begin{lemma}
\label{lem:absolute-int-cl-splinter}
Let $R$ be a Noetherian domain. Then $R$ is a splinter if and only if the map $R \rightarrow R^+$ is cyclically pure.
\end{lemma}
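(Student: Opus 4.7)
The plan is to handle each direction separately, exploiting the structure of $R^+$ as a filtered colimit of its finite $R$-subalgebras and leaning on Lemma \ref{lem:normal-approx-Gor} together with Lemma \ref{lem:pure-implies-split} for the nontrivial direction.

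For the forward direction, suppose $R$ is a splinter. I would write $R^+ = \varinjlim_S S$, where $S$ ranges over the module-finite $R$-subalgebras of $R^+$. Each such $S$ is a domain containing $R$, so $R \hookrightarrow S$ is injective and integral, hence surjective on $\Spec$ by lying-over; the splinter hypothesis then produces an $R$-linear retraction $\phi\colon S \to R$. Applying $\phi$ to any $x \in IS \cap R$ gives $x = \phi(x) \in \phi(IS) \subseteq I$, so $IS \cap R = I$ for every ideal $I$ of $R$. Passing to the filtered union over $S$ yields $IR^+ \cap R = I$ for every ideal $I$, which is precisely cyclic purity of $R \to R^+$.

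For the converse, assume $R \to R^+$ is cyclically pure. First I would observe that $R$ is normal: any $a/b \in \Frac(R)$ integral over $R$ satisfies $a \in bR^+ \cap R = bR$, hence $a/b \in R$. Lemma \ref{lem:normal-approx-Gor} then promotes the cyclic purity of $R \to R^+$ to purity. Given an arbitrary finite map $R \to T$ that is surjective on $\Spec$, note that $R \to T$ is automatically injective because $R$ is reduced. I would pick a minimal prime $\fq \subset T$ lying over the zero ideal of $R$ (such a prime exists by surjectivity on $\Spec$), so that $R \hookrightarrow T/\fq$ is a finite extension of domains. Choosing an embedding $\Frac(T/\fq) \hookrightarrow \overline{\Frac(R)}$ produces an $R$-algebra factorization
\[
R \longrightarrow T \twoheadrightarrow T/\fq \hookrightarrow R^+
\]
of the pure map $R \to R^+$ through $T$. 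Since cyclic purity of the composite forces cyclic purity of the first arrow, $R \to T$ is cyclically pure, hence pure by a second application of Lemma \ref{lem:normal-approx-Gor}. As $T$ is finitely generated over the Noetherian ring $R$, $\coker(R \to T)$ is finitely presented, and Lemma \ref{lem:pure-implies-split} provides the required $R$-linear splitting, verifying the splinter condition.

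The only delicate step is the construction of the factorization $R \to T \hookrightarrow R^+$ in the backward direction, which rests on selecting a minimal prime of $T$ contracting to $(0) \subset R$ and fixing a compatible embedding of fraction fields into the chosen algebraic closure of $\Frac(R)$. Once this is in place, the purity of $R \to R^+$ transfers to $R \to T$ almost for free, and the rest is a direct invocation of the purity machinery recorded earlier in the section.
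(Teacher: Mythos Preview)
Your proof is correct and follows essentially the same approach as the paper's: both directions match the paper closely, with the forward direction unwinding the ``filtered colimit of split maps is pure'' step by hand, and the converse direction factoring through a domain quotient of $T$ embedded in $R^+$. The only cosmetic difference is that in the converse the paper deduces purity of $R \hookrightarrow T/\fq$ directly (as the first factor of a pure map) and splits that, whereas you deduce cyclic purity of $R \to T$ from the factorization, promote it to purity via Lemma~\ref{lem:normal-approx-Gor}, and then split $R \to T$; these are equivalent manipulations.
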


\begin{proof}
Suppose $R$ is a splinter. The map $R \rightarrow R^+$ is a filtered colimit of extensions of form $R \hookrightarrow T$, where $T$ is a finitely generated $R$-subalgebra of $R^+$. But $T$ is then a finite $R$-module (since it is integral over $R$), and so, $R \hookrightarrow T$ splits. Since a filtered colimit of split maps is pure \cite[Lemma 2.1(i)]{HH95}, we see that $R \rightarrow R^+$ is cyclically pure.

Conversely, suppose $R \rightarrow R^+$ is cyclically pure. Because $R^+$ is integrally closed in its fraction field, the same reasoning as in the proof of Proposition \ref{prop:splinters-descend-cyc-pur}(2) implies that $R$ is normal. Thus, $R \rightarrow R^+$ is pure by Lemma \ref{lem:normal-approx-Gor}. Let $R \rightarrow S$ be a module finite ring map that is surjective on $\Spec$, and $\fp \in \Spec(S)$ be a prime ideal that contracts to $(0) \in \Spec(R)$. Then $R \hookrightarrow S/\fp$ is module finite extension of domains, and hence, $S/\fp$ embeds in $R^+$. As the composition $R \hookrightarrow S/\fp \hookrightarrow R^+$ is pure, so is $R \hookrightarrow S/\fp$. But then $R \hookrightarrow S/\fp$ splits by Lemma \ref{lem:pure-implies-split}, which shows that $R \rightarrow S$ also splits.
\end{proof}

\subsection{Connection with $F$-singularities}\label{subsec:splinters-F-singularities}
Splinters have long been known to be related to singularities in prime characteristic defined via the Frobenius map. This connection will be important in our analysis of how splinters behave under completions (see Example \ref{ex:completion-not-splinter}). Hence we briefly recall some relevant definitions.

If $I$ is an ideal of a Noetherian domain $R$ of prime characteristic $p > 0$, then the \emph{tight closure of $I$}, denoted $I^*$, is the collection of elements $r \in R$ for which there exists a nonzero $c \in R$ such that $cr^{p^e} \in I^{[p^e]}$, for all $e \gg 0$. Here $I^{[p^e]}$ denotes the ideal generated by $p^e$-th powers of elements of $I$. We say that $R$ is \emph{weakly $F$-regular} if all ideals of $R$ are tightly closed, that is, $I^* = I$, for any ideal $I$ of $R$. We say an ideal $I$ of $R$ is a \emph{parameter ideal} if $I$ is generated by elements $r_1,\dots,r_n \in I$ such that for any prime ideal $\fp$ of $R$ that contains $I$, the images of $r_i$ in $R_\fp$ form part of a system of parameters of $R_{\fp}$. 
We say $R$ is \emph{$F$-rational} if every parameter ideal of $R$ is tightly closed. Thus, weakly $F$-regular rings are $F$-rational, and the converse holds when $R$ is Gorenstein \cite[Corollary 4.7]{HH94(a)}.

The connections between splinters and $F$-singularities are summarized below.

\begin{remark}
\label{rem:charp-splinters}
{\*}
\begin{enumerate}
	\item If $R$ is a splinter of prime characteristic, then the Frobenius map is pure, that is, $R$ is \emph{$F$-pure}. More generally, any integral map $R \rightarrow S$ which is surjective on $\Spec$ is pure. If $(R,\fm)$ is in addition local, then purity of Frobenius implies that the completion $\widehat{R}$ is reduced, that is, $R$ is formally reduced. For this last assertion, see 
the argument given in Remark \ref{rem:cyclic-purity-vs-purity-Frobenius}.

Note that local splinters in equal characteristic $0$ need not be formally reduced because there exist Noetherian normal local rings which are not formally reduced. Indeed, Heitmann shows in
\cite[Theorem 8]{Hei93} that any equal characteristic complete local ring
of depth $\geq 2$ can always be realized as a completion of a
Noetherian local UFD. Thus, for example, the non-reduced complete ring 
$\mathbb{C}[[x,y,z]]/(x^2)$ will be the completion of a local splinter.
		
	\item If $R$ is a weakly $F$-regular domain, then $R$ is a splinter. Indeed, for any ideal $I$ of $R$, $IR^+ \cap R = I$ because 
\begin{equation}
\label{eq:tight-forcing}
IR^+ \cap R \subseteq I^* = I,
\end{equation}
where the containment in (\ref{eq:tight-forcing}) follows by \cite[Corollary 5.23]{HH94(b)}. This shows that $R \rightarrow R^+$ is cyclically pure, and so, $R$ is a splinter by Lemma \ref{lem:absolute-int-cl-splinter}.
	
	\item Lemma \ref{lem:absolute-int-cl-splinter} combined with Hochster and Huneke's famous characterization of $R^+$ being a big Cohen-Macaulay algebra \cite[Theorem 5.15]{HH92} can be used to deduce that a locally excellent\footnote{A Noetherian ring $R$ is \emph{locally excellent} if for all maximal ideals $\fm$ of $R$, $R_\fm$ is excellent.} Noetherian splinter of prime characteristic is always Cohen-Macaulay. Since splinters in equal characteristic $0$ are equivalent to normal rings, it follows that splinters are not always Cohen-Macaulay in general. 
		
	\item Smith showed that if $R$ is a locally excellent Noetherian domain of prime characteristic, then for any parameter ideal $I$ of $R$, $I^* = IR^+ \cap R$ \cite[Theorem 5.1]{Smi94}. Thus, all parameter ideals of a locally excellent splinter domain $R$ of prime characteristic are tightly closed ($IR^+ \cap R = I$ by cyclic purity of $R \rightarrow R^+$), that is, $R$ is $F$-rational. $F$-rationality is the prime characteristic analogue of the characteristic $0$ notion of rational singularities, and the latter are derived splinters \cite{Kov00}. However, $F$-rational singularities need not be (derived) splinters because there exist $F$-rational section rings of $\mathbb{P}^1$ for which the Frobenius map is not pure \cite[Example 4.4]{Wat91}. 
\end{enumerate}
\end{remark}

\section{Proofs of the main theorems}\label{sec:Proof of the main theorems}

The goal of this section is to prove Theorems \ref{Theorem-A}, \ref{Theorem-B} and \ref{Theorem-C}. Since the proof of Theorem \ref{Theorem-A} uses Theorem \ref{Theorem-B}, we prove the latter result first.

\subsection{Henselization of a splinter}\label{subsec:Henselization of splinters}
The Henselization of a local ring is constructed as a filtered colimit of certain essentially \'etale $R$-algebras, so we briefly review the notion of essentially \'etale maps first.

A local homomorphism of local rings (not necessarily Noetherian)
\[
\varphi\colon (R,\fm) \rightarrow (S,\fn)
\]
is an \emph{\'etale homomorphism of local rings} if $\varphi$ is flat, $S$ is the localization of a finitely presented $R$-algebra, $\fm S = \fn$ and the induced map $\kappa(\fm) \hookrightarrow \kappa(\fn)$ is finite separable.  A homorphism $\varphi\colon R \rightarrow S$ of rings (not necessarily local) is \emph{\'etale at $\fq \in \Spec(S)$} if the induced map 
$
R_{\varphi^{-1}(\fq)} \rightarrow S_\fq
$
is an \'etale homomorphism of local rings. We say $\varphi$ is \emph{essentially \'etale} (resp. \emph{\'etale}) if $S$ is the localization of a finitely presented (resp. is a finitely presented) $R$-algebra and $\varphi$ is \'etale at all $\fq \in \Spec(S)$. In particular, essentially \'etale maps are flat. 

\begin{remark}
\label{rem:local-etale-is-essentially-etale}

If $\varphi: (R,\fm) \rightarrow (S,\fn)$ is an \'etale homomorphism of local rings, choose a finitely presented $R$-algebra $S'$ such that $S$ is the localization of $S'$ at a prime ideal $\fq$ of $S'$. Since the locus of primes of $\Spec(S')$ at which $\Spec(S') \rightarrow \Spec(R)$ is \'etale is open, we may assume that $S'$ an \'etale $R$-algebra. The upshot is that any \'etale homomorphism of local rings is the localization of an honest \'etale map which can even be chosen to be standard \'etale \cite[\href{https://stacks.math.columbia.edu/tag/00UE}{Tag 00UE}]{stacks-project}. In other words, an \'etale homomorphism of local rings is essentially \'etale. We will frequently use this observation in the sequel.
\end{remark}


Let $(R,\fm)$ be a local ring (not necessarily Noetherian), and let $R^h$ denote its \emph{Henselization}. Recall that $R^h$ is realized as a filtered colimit of \'etale homomorphisms of local rings $(R, \fm) \hookrightarrow (S, \fn)$ such that $\kappa(\fm) \xrightarrow{\sim} \kappa(\fn)$ \cite[Chapter VIII, Th\'eor\`eme 1]{Ray70}. For a choice of a separable algebraic closure $\kappa(\fm)^{sep}$ of the residue field $\kappa(\fm)$ of $R$, one also has the \emph{strict Henselization} $R^{sh}$ of $R$ which is a filtered colimit of \'etale homomorphisms of local rings $(R, \fm) \hookrightarrow (S, \fn)$ such that $\kappa(\fm) \subseteq\kappa(\fn) \subseteq \kappa(\fm)^{sep}$ \cite[Chapter VIII, Th\'eor\`eme 2]{Ray70}. It follows that if $R$ is reduced, regular, normal, Cohen-Macaulay or Gorenstein, then so are $R^h$ and $R^{sh}$ (see \cite[\href{https://stacks.math.columbia.edu/tag/07QL}{Tag 07QL}]{stacks-project}). Additionally, the associated local maps
\[
R \rightarrow R^h \rightarrow R^{sh}
\]
are always faithfully flat \cite[\href{https://stacks.math.columbia.edu/tag/07QM}{Tag 07QM}]{stacks-project}.

When $(R,\fm)$ is Noetherian local, the completion of $R^h$ at its maximal ideal $\fm R^h$ coincides with $\widehat{R}$, that is, the induced map $\widehat{R} \rightarrow \widehat{R^h}$ on completions is an isomorphism \cite[\href{https://stacks.math.columbia.edu/tag/06LJ}{Tag 06LJ}]{stacks-project}. This implies the following relation between $\fm$-primary ideals of $R$ and $\fm R^h$-primary (resp. $\fm \widehat{R}$-primary) ideals of $R^h$ (resp. $\widehat{R}$).

\begin{lemma}
\label{lem:m-primary-ideals-Henselization}
Let $\varphi: (R, \fm) \rightarrow (S, \fn)$ be a local homomorphism of  Noetherian local rings such that the induced map on completions $\widehat{R} \rightarrow \widehat{S}$ is an isomorphism (for example, if $S = R^h, \widehat{R}$). Then expansion and contraction of ideals via $\varphi$ induces a bijection between $\fm$-primary ideals of $R$ and $\fn$-primary ideals of $S$.
\end{lemma}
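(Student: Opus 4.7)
The plan is to factor the desired bijection through the common completion: I will put both $\{\fm\text{-primary ideals of } R\}$ and $\{\fn\text{-primary ideals of } S\}$ in bijection with the set of $\fm\widehat{R}$-primary ideals of $\widehat{R}$, which by hypothesis is the same as the set of $\fn\widehat{S}$-primary ideals of $\widehat{S}$, and then check that the resulting composite is precisely $I \mapsto IS$ with inverse $J \mapsto J \cap R$.

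First I would verify that $\fm S = \fn$. Since the induced map $\widehat{\varphi}\colon \widehat{R} \to \widehat{S}$ is a local isomorphism, it identifies $\fm\widehat{R}$ with $\fn\widehat{S}$. Intersecting back with $S$ and using faithful flatness of $S \to \widehat{S}$, this gives $\fm S = \fm\widehat{S} \cap S = \fn\widehat{S} \cap S = \fn$. As an immediate consequence, $I \subseteq R$ is $\fm$-primary if and only if $IS$ is $\fn$-primary (since $\fm^n \subseteq I$ forces $\fn^n = \fm^n S \subseteq IS$, while $IS \subseteq \fn$), and if $J$ is $\fn$-primary then $J \cap R$ is $\fm$-primary (since $\fm^n \subseteq \fn^n \subseteq J$).

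Next I would record the standard one-step correspondence: for any Noetherian local ring $(A,\fa)$, extension and contraction along the faithfully flat completion map $A \to \widehat{A}$ give mutually inverse bijections between $\fa$-primary ideals of $A$ and $\fa\widehat{A}$-primary ideals of $\widehat{A}$. This reduces, via the isomorphism $A/\fa^n \cong \widehat{A}/\fa^n\widehat{A}$, to the elementary bijection between ideals of $A$ containing $\fa^n$ and ideals of the Artinian quotient $A/\fa^n$, together with $I\widehat{A} \cap A = I$ from faithful flatness.

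Applying this correspondence once for $(R,\fm)$ and once for $(S,\fn)$, and composing with the bijection on $\fm\widehat{R}$-primary ideals induced by the isomorphism $\widehat{\varphi}$, one obtains a bijection between $\fm$-primary ideals of $R$ and $\fn$-primary ideals of $S$. The final step is a brief diagram chase to identify this composite with extension and contraction along $\varphi$: writing $c_R\colon R \to \widehat{R}$ and $c_S\colon S \to \widehat{S}$ for the completion maps, the compatibility $\widehat{\varphi} \circ c_R = c_S \circ \varphi$ yields $\widehat{\varphi}(I\widehat{R}) = IS \cdot \widehat{S}$, which contracts along $c_S$ to $IS$ by faithful flatness; the argument for $J \mapsto J \cap R$ in the reverse direction is analogous, using $\varphi^{-1}(J) = c_R^{-1}(\widehat{\varphi}^{-1}(J\widehat{S}))$. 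I do not anticipate any genuine obstacle: the only care needed is in the last diagram chase, but this is a purely formal consequence of the fact that the completion maps intertwine $\varphi$ and $\widehat{\varphi}$.
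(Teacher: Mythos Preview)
Your proposal is correct and follows essentially the same approach as the paper: both arguments verify $\fm S = \fn$ via faithful flatness of $S \to \widehat{S}$, and both exploit the isomorphisms $R/\fm^n \cong \widehat{R}/\fm^n\widehat{R} \cong \widehat{S}/\fn^n\widehat{S} \cong S/\fn^n$ to transport primary ideals. The only cosmetic difference is that the paper reads off the bijection directly from $R/\fm^n \cong S/\fn^n$, whereas you factor through the common completion and then perform a short diagram chase; the content is the same.
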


\begin{proof}
Our hypothesis implies that the composition $R \xrightarrow{\varphi} S \rightarrow \widehat{S}$ coincides, up to isomorphism, with the completion $R \rightarrow \widehat{R}$ which is faithfully flat. In particular, $\varphi$ is a pure map, which implies that expansion of ideals gives an injective map from the collection of ideals of $R$ to the collection of ideals of $S$. Thus, to prove the lemma, it remains to show that any $\fn$-primary ideal of $S$ is the expansion of an $\fm$-primary ideal of $R$ and that $\fm$-primary ideals of $R$ expand to $\fn$-primary ideals of $S$. 

Now because $(\fm S)\widehat{S} = (\fm \widehat{R})\widehat{S} = \fn\widehat{S}$, faithful flatness of $S \rightarrow \widehat{S}$ shows that $\fm S = \fn$. Tensoring the isomorphism $\widehat{R} \xrightarrow{\sim} \widehat{S}$ by $R/\fm^n$ then implies that for all $n \geq 1$, we have
\[
R/\fm^n \xrightarrow{\sim} \widehat{R}/\fm^n\widehat{R} \xrightarrow{\sim} \widehat{S}/\fm^n\widehat{S} \xrightarrow{\sim} S/\fn^n.
\]
Since an $\fm$-primary ideal of $R$ contains $\fm^n$ for some $n \gg 0$, it follows from the above isomorphisms that $\fm$-primary ideals of $R$ expand to $\fn$-primary ideals of $S$.
Let $I$ be an $\fn$-primary ideal of $S$, and choose $n \geq 1$ such that $\fn^n \subseteq I$. If $\overline{I}$ is the image of $I$ in $S/\fn^n$, then by the isomorphism $R/\fm^n \xrightarrow{\sim} S/\fn^n$, there exist $r_1, \dots, r_m \in R$ whose images in $S/\fn^n$ generate the ideal $\overline{I}$. Then $I$ is the expansion of the $\fm$-primary ideal $\fm^n + (r_1,\dots,r_m)$.
\end{proof}

\begin{remarks}
\label{rem:etale-local-localization-honest-etale}
{\*}
\begin{enumerate}
 \item Lemma \ref{lem:m-primary-ideals-Henselization} does not hold if  $S = R^{sh}$ is the strict Henselization of $R$, since in this case the maps $R/\fm^n \rightarrow R^{sh}/\fm^n R^{sh}$ are no longer isomorphisms. 
 
 \item In the statement of Lemma \ref{lem:m-primary-ideals-Henselization}, if the induced map $\widehat{R} \rightarrow \widehat{S}$ is surjective, then every $\fn$-primary ideal of $S$ is the expansion of an $\fm$-primary ideal of $R$. However, uniqueness is lost.
\end{enumerate} 
\end{remarks}

The goal in the rest of this subsection is to show that the splinter condition is preserved under Henselization (Theorem \ref{Theorem-B}). But first, we establish an ideal theoretic result inspired by \cite[Proposition 5.10]{Smi94}. Smith's result is proved under excellence hypothesis and deals with the completion of Noetherian local rings. Surprisingly, it turns out that an analogue of her result holds for henzelizations of arbitrary non-Noetherian normal local domains. Thus we carefully prove the result we need, which should be of independent interest.

\begin{proposition}[c.f. {\cite[Proposition 5.10]{Smi94}}]
\label{prop:ideal-identity-hensel}
Let $(R,\fm)$ be a normal local domain (not necessarily Noetherian), and let $R^h$ and $R^{sh}$ denote its Henselization and strict Henselization respectively. Then we have the following:
\begin{enumerate}
 \item $R^h$ (resp. $R^{sh}$) is a normal domain.  
 \item If $I$ is an ideal of $R$, then $I(R^h)^+ \cap R = IR^+ \cap R = I(R^{sh})^+ \cap R$.
\end{enumerate}
\end{proposition}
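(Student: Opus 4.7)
The plan is to dispatch (1) via a colimit argument, then prove (2) by realizing $(R^h)^+$ and $(R^{sh})^+$ as localizations of $R^+$ inside a fixed algebraic closure $\bar K$ of $K := \Frac(R)$, and finally to perform an ideal-theoretic descent adapted from \cite[Proposition 5.10]{Smi94}.

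For (1), both $R^h$ and $R^{sh}$ are filtered colimits of essentially \'etale local $R$-algebras $(R,\fm) \hookrightarrow (S,\fn)$ with injective transition maps (see Remark \ref{rem:local-etale-is-essentially-etale}). \'Etale morphisms are flat with geometrically regular fibers, so \'etale extensions of a normal ring remain normal without any Noetherian hypothesis, and in particular each such $S$ is a normal domain. A filtered colimit of normal domains along injective transitions is itself a normal domain: the colimit is visibly a domain, and integrality of any element in its fraction field over the colimit can be checked at a single stage containing the coefficients of an integrality equation, where normality of that stage forces the element to lie there.

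For (2), fix $\bar K$ and compatible embeddings $K^h := \Frac(R^h) \subseteq K^{sh} := \Frac(R^{sh}) \subseteq \bar K$, which exist because both extensions are algebraic. For each essentially \'etale local $S = T_\fq$ occurring in the colimit for $R^h$ (resp.\ $R^{sh}$), with $T$ a finite \'etale $R$-algebra and $\fq$ a prime of $T$ over $\fm$, the key observation is that $S^+$ coincides with the localization of $R^+$ at the image of $T \setminus \fq$: indeed $T \subseteq R^+$ by integrality, and a localization of an integrally closed domain remains integrally closed. Passing to the filtered union, $(R^h)^+ = (R^+)_V$ and $(R^{sh})^+ = (R^+)_{V'}$ for multiplicative sets $V \subseteq V' \subseteq R^+$, and the containments $IR^+ \cap R \subseteq I(R^h)^+ \cap R \subseteq I(R^{sh})^+ \cap R$ are immediate.

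For the reverse, if $a \in R$ lies in $I(R^{sh})^+ = I(R^+)_{V'}$, then $av \in IR^+$ for some $v \in V'$, and the main obstacle---the crux of the proof---is to deduce $a \in IR^+$. This is where denominators inverted in passing from $R^+$ to $(R^{sh})^+$ must be shown not to introduce new elements of $R$ into the contracted ideal. The argument parallels Smith's for completions \cite[Proposition 5.10]{Smi94}: each generator of $V'$ lies in a finite \'etale (hence normal) $R$-algebra $T$ outside a prime over $\fm$, so normality of $T$ combined with the trace or norm machinery of the \'etale extension allows one to remove the denominator $v$ and descend the relation $av \in IR^+$ to $a \in IR^+$.
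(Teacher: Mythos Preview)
Your plan for (1) is fine and matches the paper.  For (2), however, the proposal has two substantive problems.

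First, the assertion that each essentially \'etale local $R$-algebra $S$ arising in the colimit for $R^h$ (or $R^{sh}$) can be written as $T_\fq$ with $T$ a \emph{finite \'etale} $R$-algebra is not correct.  Via the standard \'etale presentation (or Zariski's Main Theorem) one can indeed write $S$ as the localization of a \emph{finite} $R$-algebra, but that finite algebra will not in general be \'etale over $R$; conversely the honest \'etale $R$-algebra of which $S$ is a localization is quasi-finite but typically not finite.  So the sentence ``$T \subseteq R^+$ by integrality'' is only justified after dropping the word ``\'etale'', and then the trace/norm apparatus you invoke downstream is no longer available.  (It is true that $(R^h)^+$ and $(R^{sh})^+$ are localizations of $R^+$ at a single prime---this is an observation of Artin~\cite{Art71} that the paper itself uses elsewhere---but this requires an argument you have not given.)

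Second, and more seriously, the last paragraph is not a proof but a promissory note.  Granting that $(R^{sh})^+ = (R^+)_{V'}$, you arrive at $av \in IR^+$ for some $v \in V'$ and must conclude $a \in IR^+$.  You write that ``normality of $T$ combined with the trace or norm machinery of the \'etale extension allows one to remove the denominator $v$'', but no such mechanism is exhibited, and it is not clear how a trace from a finite \'etale $T$ down to $R$ would interact with the relation $av \in IR^+$ (the relation lives in $R^+$, not in $T$).  This is precisely the crux of the proposition.  The paper handles it differently: it descends the relation $z \in I(R^h)^+$ to $z \in IT'$ for a domain $T'$ that is \emph{quasi-finite} and faithfully flat over $R$, and then proves a separate lemma (Lemma~\ref{lem:quasifin-plus-closure}) showing $IT' \cap R \subseteq IR^+ \cap R$ for any such $T'$.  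That lemma in turn rests on Zariski's Main Theorem together with the transitivity of the action of $\Aut(\Frac(R^+)/\Frac(R))$ on the primes of $R^+$ lying over a fixed prime of $R$ (valid because $R$ is normal).  This Galois-transitivity step is the genuine content replacing your unspecified ``trace or norm machinery'', and it is absent from the proposal.
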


\begin{proof}
$(1)$ The fact that $R^h$ and $R^{sh}$ are normal local domains is a consequence of the ascent of normality under \'etale maps \cite[\href{https://stacks.math.columbia.edu/tag/06DI}{Tag 06DI}]{stacks-project}. 

$(2)$ Since $(R^h)^+$ and $(R^{sh})^+$ are $R^+$-algebras,  $IR^+ \cap R$ is contained in $I(R^h)^+ \cap R$ and  $I(R^{sh})^+ \cap R$. Let $z \in I(R^h)^+ \cap R$ (resp. $z \in I(R^{sh})^+ \cap R$). Then there exists a module finite extension domain $T$ of $R^h$ (resp. of $R^{sh}$) contained in $(R^h)^+$ (resp. $(R^{sh})^+$) such that $z \in IT$. Let $i_1,\dots,i_n \in I$ and $t_1,\dots, t_n \in T$ such that 
\[
z = i_1t_1 + \dots + i_nt_n.
\]
Note that the $t_i$ are integral over $R^h$ (resp. $R^{sh}$). Since $R^h$ (resp. $R^{sh}$) is a filtered colimit of \'etale local $R$-algebras, there exists an \'etale homomorphism of local rings $(R,\fm) \hookrightarrow (S,\fn)$ such that $S \subseteq R^h$ (resp. $S \subseteq R^{sh}$) and $t_1,\dots,t_n$ are integral over $S$. 

By Remark \ref{rem:local-etale-is-essentially-etale}, choose an \'etale $R$-algebra $S'$ such that $S$ is the localization of $S'$ at a prime ideal $\fq$ of $S'$. Since $R$ is a normal domain and $R \rightarrow S'$ is \'etale, $S'$ is a normal ring by \cite[\href{https://stacks.math.columbia.edu/tag/033C}{Tag 033C}]{stacks-project}. Moreover, $S'$ has only finitely many minimal primes because all minimal primes of $S'$ must contract to $(0)$ in $R$ by \emph{going down} \cite[\href{https://stacks.math.columbia.edu/tag/00HS}{Tag 00HS}]{stacks-project}, and the generic fiber of $R \rightarrow S'$ is a finite product of finite separable field extensions of $\Frac(R)$. In particular, $S'$ decomposes as a finite product of normal domains by \cite[\href{https://stacks.math.columbia.edu/tag/030C}{Tag 030C}]{stacks-project}, and since $S$ is the localization of one these factors, we may assume $S'$ is a domain. Thus, $S' \subseteq S \subseteq T$. Furthermore, after localizing $S'$ at a suitable element not in $\fq$, we may also assume $t_1, \dots, t_n$ are integral over $S'$. Let $T'$ be the $S'$-subalgebra $S'[t_1,\dots,t_n]$ of $T$.  Then $T'$ is a module-finite extension domain of $S'$ by construction, and 
\[
z \in IT' \cap R.
\] 
Note that $T'$ is not necessarily an integral extension of $R$ (otherwise we would be done). However, the composition $R \rightarrow S' \rightarrow T'$ is quasi-finite since \'etale maps and finite maps are quasi-finite \cite[\href{https://stacks.math.columbia.edu/tag/00U5}{Tag 00U5} and \href{https://stacks.math.columbia.edu/tag/00PM}{Tag 00PM}]{stacks-project} and the composition of quasi-finite maps is quasi-finite \cite[\href{https://stacks.math.columbia.edu/tag/00PO}{Tag 00PO}]{stacks-project}. Moreover, the induced map $\Spec(T') \rightarrow \Spec(R)$ is surjective because $R \rightarrow S'$ is faithfully flat (it is \'etale, $(R,\fm)$ is local and $\fq$ lies over $\fm$)  and $S' \rightarrow T'$ is a finite extension. Then $z \in IR^+ \cap R$ using the following Lemma which makes no further use of the Henselian property.

\begin{lemma}
\label{lem:quasifin-plus-closure}
Let $R \hookrightarrow S$ be a quasi-finite extension of domains (not necessarily Noetherian) such that the induced map on $\Spec$ is surjective. Suppose also that $R$ is integrally closed in its fraction field, that is, $R$ is normal. Then we have the following: 
\begin{enumerate}
	\item $S$ can be identified as a subring of $\Frac(R^+)$. 
	\item With the identification from $(1)$, if $I$ is an ideal of $R$ and $z \in IS \cap R$, then $z \in IR^+ \cap R$.
	\item With the identification from $(1)$, if $I$ is an ideal of $R$, then $IS^+ \cap R = IR^+ \cap R$. 
	\item If $R$ is a splinter, then $R \rightarrow S$ is pure.
\end{enumerate}
\end{lemma}

Recall that a finite type map $\varphi: R \rightarrow S$ is \emph{quasi-finite at $\fq \in \Spec(S)$}, if $\fq$ is isolated in its fibre. We say $\varphi$ is \emph{quasi-finite} if it is quasi-finite at all $\fq \in \Spec(S)$ \cite[\href{https://stacks.math.columbia.edu/tag/00PL}{Tag 00PL}]{stacks-project}, or equivalently, if $\varphi$ has finite fibers \cite[\href{https://stacks.math.columbia.edu/tag/00PM}{Tag 00PM}]{stacks-project}. Finite maps, unramified maps and \'etale maps are quasi-finite. Quasi-finite maps are `close' to being finite by Zariski's Main Theorem -- if $R \rightarrow S$ is quasi-finite at $\fq \in \Spec(S)$ and $\overline{S}$ is the integral closure of $R$ in $S$, then there exists $g \in \overline{S}$ such that $g \notin \fq$ and $\overline{S}_g \xrightarrow{\sim} S_g$ \cite[\href{https://stacks.math.columbia.edu/tag/00Q9}{Tag 00Q9}]{stacks-project}. Since ${S}_g$ is of finite type over $R$, it is then easy to see that $S_g = T_g$, for an $R$-subalgebra $T \subseteq \overline{S}$ such that $R \rightarrow T$ is finite
\cite[\href{https://stacks.math.columbia.edu/tag/00QB}{Tag 00QB}]{stacks-project}.

\noindent\emph{Proof of Lemma \ref{lem:quasifin-plus-closure}.}
We first prove $(1)$ and $(2)$. Note that once $S$ is identified as a subring of $\Frac(R^+)$, $(2)$ will follow if we can show that for each prime ideal $\fP$ of $R^+$, 
\[
z \in (IR^+)R^+_\fP = IR^+_\fP.
\]
Choose any $\fP \in \Spec(R^+)$, and let $\fp \coloneqq \fP \cap R$. Let $\overline{S}$ denote the integral closure of $R$ in $S$, and $\fq$ be a prime ideal of $S$ such that 
\[
\fq \cap R = \fp.
\]
Such a prime exists because $R \hookrightarrow S$ is surjective on $\Spec$. Then by Zariski's Main Theorem \cite[\href{https://stacks.math.columbia.edu/tag/00Q9}{Tag 00Q9}]{stacks-project}, there exists $g \in \overline{S}$, $g \notin \fq$ and an isomorphism 
\[
\overline{S}_g \xrightarrow{\sim} {S}_g.
\]
This implies $\Frac(S) = \Frac(\overline{S})$, and so, $\Frac(S)$ is an algebraic extension of $\Frac(R)$. In particular, we may identify $\Frac(S)$, hence also $S$ and  $\overline{S}$, as subrings of $\Frac(R^+)$, which proves $(1)$.

With the identification from $(1)$, we have $R \subseteq \overline{S} \subseteq R^+$. Thus, $\overline{S} \subseteq R^+$ is also an integral extension, and consequently,
\[
\overline{S}_g = {S}_g \subseteq R^+_g
\] 
is an integral extension as well. Let $\fQ$ be a prime ideal of $R^+$ not containing $g$ such that $\fQ R^+_g$ lies over $\fq S_g$. Since $\fq$ contracts to $\fp$ in $R$ by our choice of $\fq$, it follows that $\fQ$ also lies over $\fp$ in $R$. Moreover, $z \in IS \cap R$ implies
\[
z \in I{S}_g \subseteq IR^+_g \subseteq IR^+_\fQ.
\]
As $R$ is integrally closed in $\Frac(R)$ and $\Frac(R^+)/\Frac(R)$ is a normal algebraic field extension, the elements of $\Aut(\Frac(R^+)/\Frac(R))$ (which induce automorphisms of $R^+$ over $R$) act transitively on the prime ideals of $R^+$ that lie over $\fp$ \cite[Chapter V, $\mathsection 2.3$, Proposition 6(i)]{Bou89}\footnote{Bourbaki uses the term quasi-Galois extension in \cite[Chapter V, $\mathsection 2.3$, Proposition 6]{Bou89} as a synonym for a normal algebraic field extension. See the footnote on \cite[Pg. 331]{Bou89} as well as \cite[Chapter V, $\mathsection 9$, no. 3, Definition 2]{Bou03}.}. Since $\fQ$ and $\fP$ are prime ideals of $R^+$ that both lie over $\fp$ and $z \in R$ by the hypothesis of (2), we then get
\[
z \in IR^+_\fQ \Rightarrow z \in IR^+_\fP.
\]
However, $\fP$ is an arbitrary prime ideal of $R^+$, and so, $z \in IR^+$, hence also $z \in IR^+ \cap R$, proving $(2)$.

(3) The non-trivial inclusion is $IS^+ \cap R \subseteq IR^+ \cap R$. Suppose $z \in IS^+ \cap R$. Since $S^+$ is a union of module-finite $S$-subalgebras, there exists a module-finite extension $T$ of $S$ contained in $S^+$, such that $z \in IT \cap R$. However, the composition $R \hookrightarrow S \hookrightarrow T$ is quasi-finite since $R \hookrightarrow S$ and $S \hookrightarrow T$ are quasi-finite. Moreover, the induced map $\Spec(T) \rightarrow \Spec(R)$ remains surjective. Then $z \in IT \cap R$ implies $z \in IR^+ \cap R$ by $(2)$.

(4) Note that by definition of a splinter, $R$ is automatically Noetherian. Since purity of $R \rightarrow S$ can be checked locally, we may assume $R$ is local. By Lemma \ref{lem:normal-approx-Gor}, it suffices to show that $R \rightarrow S$ is cyclically pure, that is, for any ideal $I$ of $R$, $IS \cap R = I$. But this follows from (3), since we have
\[
I \subseteq IS \cap R \subseteq IS^+ \cap R = IR^+ \cap R = I.
\]
Here the final equality follows by cyclic purity of $R \rightarrow R^+$ because $R$ is a splinter (Lemma \ref{lem:absolute-int-cl-splinter}). 
\end{proof}

Armed with Proposition \ref{prop:ideal-identity-hensel}, the proof of Theorem \ref{Theorem-B} is now a formal exercise.

\begin{proof}[Proof of Theorem \ref{Theorem-B}]
If $R^h$ is a splinter, then by descent of splinters along the faithfully flat map $R \rightarrow R^h$ (Lemma \ref{prop:splinters-descend-cyc-pur}), it follows that $R$ is a splinter. We now show the converse, that is, we show that if a Noetherian local ring $(R,\fm)$ is a splinter, then $R^h$ is a splinter. Since $R$ is normal (see Lemma \ref{lem:local-splinters-domain}), $R^h$ is normal. To show that $R^h$ is a splinter, it then suffices to show by Lemma \ref{lem:absolute-int-cl-splinter} that the map 
\[
R^h \rightarrow (R^h)^+
\]
is cyclically pure. By normality of $R^h$, purity of the map $R^h \rightarrow (R^h)^+$ will follow by Lemma \ref{lem:normal-approx-Gor} if we can show that for every $\fm R^h$-primary ideal $I$ of $R^h$, the induced map
\[
R^h/I \rightarrow (R^h)^+/I(R^h)^+
\]
is injective. 

Therefore, let $I$ be an $\fm R^h$-primary ideal. By Lemma \ref{lem:m-primary-ideals-Henselization}, there exists a unique $\fm$-primary ideal $J$ of $R$ such that 
\[
I = JR^h.
\] 
We then have 
\begin{equation}
\label{eq:ideal-identity-Henselization}
(I(R^h)^+ \cap R^h) \cap R = J(R^h)^+ \cap R = JR^+ \cap R = J,
\end{equation}
where the first equality is obvious, the second equality follows from Proposition \ref{prop:ideal-identity-hensel}, and the third equality follows from cyclic purity of $R \rightarrow R^+$ because $R$ is a splinter (Lemma \ref{lem:absolute-int-cl-splinter}). Observe that the ideal
\[
I(R^h)^+ \cap R^h
\]
is $\fm R^h$-primary. Indeed, $I(R^h)^+ \cap R^h$ contains the $\fm R^h$-primary ideal $I$, and it is also contained in $\fm R^h$ because $J \neq R$. The upshot of this observation is that $I(R^h)^+ \cap R^h$ must also be expanded from a unique $\fm$-primary ideal $J'$ of $R$. Then uniqueness and (\ref{eq:ideal-identity-Henselization}) forces $J' = J$. Thus, $I(R^h)^+ \cap R^h = JR^h =  I$, which is equivalent to the injectivity of $R^h/I \rightarrow (R^h)^+/I(R^h)^+$. This completes the proof. 
\end{proof}

\begin{remark}
Theorem \ref{Theorem-B} immediately implies a special case of Theorem \ref{Theorem-A}, namely that if $(R,\fm) \hookrightarrow (S,\fn)$ is an \'etale homomorphism of Noetherian local rings such that the induced map on residue fields is an isomorphism, then $R$ is a splinter if and only if $S$ is a splinter. Since $R \rightarrow S$ is faithfully flat, $R$ is a splinter if $S$ is by faithfully flat descent of the splinter property (Proposition \ref{prop:splinters-descend-cyc-pur}). Conversely, suppose $R$ is a splinter. As the Henselization of $(S,\fn)$ is also $R^h$, it follows that there is a faithfully flat local map $S \rightarrow R^h = S^h$. Now $R^h$ is a splinter by Theorem \ref{Theorem-B}, and so, $S$ is a splinter again by faithfully flat descent.
\end{remark}

As a consequence of Theorem \ref{Theorem-B}, we can show that the map 
$R \rightarrow R^+$
is not just pure for a splinter domain $R$, but also \emph{strongly pure} 
in the sense that all induced maps on the local rings of $\Spec(R^+) \rightarrow \Spec(R)$
are pure (see \cite[$\mathsection 2$]{CGM16} for more on strong purity). 
For example, any faithfully flat map is always strongly pure, and since purity is a local
condition, it follows that any strongly pure homomorphism $R \rightarrow S$ that is 
surjective on $\Spec$ is pure. However, there exist pure maps that are not
strongly pure \cite[Corollary 5.6.2]{CGM16} even for maps between 
finite type algebras over a field.

\begin{corollary}
\label{cor:R+-localization}
Suppose $R$ is a domain which is a splinter. Let $\fQ \in \Spec(R^+)$ and let $\fq \coloneqq \fQ \cap R$. Then the induced map on local rings $R_\fq \rightarrow (R^+)_\fQ$ is pure.
\end{corollary}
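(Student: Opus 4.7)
The plan is to reduce to the case where $R$ is local and $\fQ$ is a maximal ideal of $R^+$. By Lemma \ref{lem:splinters-local}, $R_\fq$ is itself a splinter, and since $(R_\fq)^+$ is the localization $(R \setminus \fq)^{-1} R^+$, we have $(R^+)_\fQ = ((R_\fq)^+)_{\fQ(R_\fq)^+}$. Hence we may replace $R$ by $R_\fq$ and assume $(R, \fm)$ is a Noetherian local splinter with $\fQ \cap R = \fm$. Writing $R^+$ as the directed union of its module-finite $R$-subalgebras $T$, each of which is automatically a domain, we obtain $(R^+)_\fQ = \varinjlim_T T_{\fQ \cap T}$; since filtered colimits preserve injections, it suffices to show $R \to T_{\fQ \cap T}$ is pure for every such $T$.

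Fix $T$ and set $\fn \coloneqq \fQ \cap T$, which is a maximal ideal of $T$ by integrality of $R \to T$. By Theorem \ref{Theorem-B}, the Henselization $R^h$ is also a splinter. Because $R^h$ is Henselian local and $T \otimes_R R^h$ is module-finite over $R^h$, this tensor product decomposes into a finite product of local Henselian $R^h$-algebras indexed by the maximal ideals of $T$ lying over $\fm$, with the factor at $\fn$ identified with the Henselization $(T_\fn)^h$ (a standard consequence of lifting idempotents from $T / \fm T$). Now $(T_\fn)^h$ is a module-finite extension domain of $R^h$ which is surjective on $\Spec$ by lying-over, so the splinter hypothesis on $R^h$ furnishes an $R^h$-module splitting, and in particular $R^h \to (T_\fn)^h$ is pure.

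Precomposing with the faithfully flat map $R \to R^h$ shows $R \to (T_\fn)^h$ is pure. Since this composition also factors as $R \to T_\fn \to (T_\fn)^h$ and the second map $T_\fn \to (T_\fn)^h$ is faithfully flat, the standard fact that a pure composition with faithfully flat right factor forces the left factor to be pure yields $R \to T_\fn = T_{\fQ \cap T}$ pure. Passing to the filtered colimit over $T$ then gives $R \to (R^+)_\fQ$ pure, as desired. The principal obstacle in this strategy is that $T_{\fQ \cap T}$ is only essentially of finite type over $R$, not finite, so the splinter property of $R$ cannot be invoked directly to produce a splitting; the detour through $R^h$ is precisely what converts the essentially finite type extension into the module-finite extension $R^h \to (T_\fn)^h$, where Theorem \ref{Theorem-B} applies cleanly.
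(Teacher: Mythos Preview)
Your argument is essentially correct, but there is one inaccuracy worth flagging: you assert that $(T_\fn)^h$ is a \emph{domain}. This need not hold---the Henselization of a non-normal local domain can acquire several minimal primes (think of the node of a curve, whose branches separate after Henselizing). Concretely, with $R=k[x]_{(x)}$ and $T=R[t]/(t^2-x^2(1+x))$ in characteristic $\neq 2$, the ring $T\otimes_R R^h$ is local but not a domain, since $1+x$ becomes a square in $R^h$. Fortunately this does not damage your proof: $(T_\fn)^h$ is a nonzero direct factor of $T\otimes_R R^h$, and the latter is torsion-free over the normal domain $R^h$ (as $T$ is $R$-torsion-free and $R\to R^h$ is flat), so $R^h\to (T_\fn)^h$ is still an injective finite map, hence surjective on $\Spec$, and the splinter property of $R^h$ applies exactly as you intend. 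Also, in your final step the faithfully flat hypothesis on $T_\fn\to (T_\fn)^h$ is unnecessary: the left factor of any pure map of $R$-modules is automatically pure.

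Your route differs genuinely from the paper's. The paper first invokes an argument of Artin to identify $(R^h)^+$ with a localization $(R^+)_\fP$ at a single prime $\fP$ over $\fm$, obtains purity of $R\to (R^+)_\fP$ from Theorem~\ref{Theorem-B}, and then transports this to the given prime $\fQ$ using the transitive action of $\Aut(R^+/R)$ on primes over $\fm$ (Bourbaki). You instead work finitely: writing $(R^+)_\fQ$ as a filtered colimit of the $T_{\fQ\cap T}$ and using the Henselian decomposition of $T\otimes_R R^h$ (essentially Lemma~\ref{lem:Henselization-unramified-maps}) to hit $\fQ$ directly at each finite stage. This avoids both Artin's theorem and the transitivity input, at the cost of a more hands-on bookkeeping argument; both proofs ultimately rest on Theorem~\ref{Theorem-B}.
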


\begin{proof}
Note that $(R^+)_\fQ$ is a further localization of $(R^+)_\fq$, so the purity of $R_\fq \rightarrow (R^+)_\fQ$ 
is not an immediate consequence of the purity of $R_\fq \rightarrow (R^+)_\fq = (R_\fq)^+$ 
(this latter map is pure because $R_\fq$ is a splinter). 
Replacing $R$ by $R_\fq$ and $R^+$ by $(R^+)_\fq = (R_\fq)^+$,
we may assume that $R$ is local and $\fq$ is the maximal ideal of $R$. 
We have to show that $R \rightarrow (R^+)_\fQ$ is pure. 
Since $R$ is normal, its Henselization $R^h$ can be identified
as a subring of $\Frac(R^+)$. Moreover, an argument of Artin 
(see the proof of \cite[Theorem 2.2]{Art71})
shows that $(R^h)^+$ is the localization of $R^+$ at a prime ideal $\fP$ of $R^+$ that
lies above the maximal ideal of $R$. Then $R \rightarrow R^h \rightarrow (R^h)^+ = (R^+)_\fP$
is pure because it is a composition of two pure maps: 
$R \rightarrow R^h$ is pure by faithful flatness 
and $R^h \rightarrow (R^+)_\fP$ is pure by Theorem \ref{Theorem-B}. 
However, both $\fQ$ and $\fP$ lie
above the maximal ideal of $R$. By 
\cite[Chapter V, $\mathsection 2.3$, Proposition 6(i)]{Bou89}, 
choose $\varphi \in \Aut(R^+/R)$ such that 
$\varphi(\fP) = \fQ$. Then $\varphi$ induces an $R$-algebra isomorphism
$(R^+)_\fP \xrightarrow{\sim} (R^+)_\fQ$, and so, the composition 
$R \rightarrow (R^+)_\fP \xrightarrow{\sim} (R^+)_\fQ$ is pure, as desired.
\end{proof}


As far as we are aware, it is not known whether the Henselization of an \emph{arbitrary} weakly $F$-regular Noetherian local ring of prime characteristic $p > 0$ is weakly $F$-regular. The best known result, due to Hochster and Huneke, is that weak $F$-regularity is preserved under Henselization for local $G$-rings \cite[Theorem (7.24)]{HH94(a)}. Recall that we a say a Noetherian ring $R$, not necessarily local, is a \emph{$G$-ring} if for all maximal ideals $\fm$ of $R$, the formal fibers of $R_\fm$ are geometrically regular (c.f. Remark \ref{rem:G-ring-Henselization}). Note this implies that for any $\fp \in \Spec(R)$, the formal fibers of $R_\fp$ are geometrically regular \cite[Th\'eor\`eme 5.3.1]{ILO14}.  At first glance, \cite[Theorem (7.24)]{HH94(a)} implies the seemingly more general statement that weak $F$-regularity of a Noetherian local ring $(R,\fm)$ is preserved under Henselization if $R^h$ is a $G$-ring and the singular locus of $R^h$ is closed. However, $R^h$ is a $G$-ring if and only if $R$ is a $G$-ring \cite[Theorem 5.3]{Gre76}, and the singular locus of a local $G$-ring is always closed \cite[Expos\'e I, Proposition 5.5.1]{ILO14}. Thus, the requirement for the singular locus of the target to be closed is unnecessary in the statement of \cite[Theorem (7.24)]{HH94(a)}, and the best known result is indeed the one stated above.

Nevertheless, Theorem \ref{Theorem-B} has the following consequence for arbitrary weakly $F$-regular local rings.

\begin{corollary}
\label{Henselization-F-regular}
Let $(R, \fm)$ be a Noetherian local ring of prime characteristic $p > 0$. If $R$ is weakly $F$-regular (i.e. all ideals are tightly closed), then $R^h$ is a splinter.
\end{corollary}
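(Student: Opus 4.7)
The plan is to reduce the statement to a direct combination of Remark \ref{rem:charp-splinters}(2) and Theorem \ref{Theorem-B}, once one observes that the hypothesis forces $R$ to be a domain.

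First, I would note that a Noetherian local ring $(R,\fm)$ that is weakly $F$-regular is automatically normal, a fact established by Hochster--Huneke in \cite{HH90}. Since $R$ is local and normal, $R$ is a domain. (Alternatively, one can observe directly from the definition that in a weakly $F$-regular ring, every nilpotent element lies in the tight closure of the zero ideal, so $R$ is reduced; combined with normality this gives a domain in the local case.)

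Next, by Remark \ref{rem:charp-splinters}(2), any weakly $F$-regular domain is a splinter; the essential content is that for every ideal $I$ of $R$, one has $IR^+\cap R\subseteq I^{*}=I$, so $R\to R^+$ is cyclically pure, and Lemma \ref{lem:absolute-int-cl-splinter} then gives that $R$ is a splinter. Therefore $R$ is a Noetherian local splinter.

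Finally, I would apply Theorem \ref{Theorem-B} to conclude that the Henselization $R^h$ is a splinter. No obstacle arises here: the corollary is a direct consequence of the two results above, and the only point that requires any comment is the passage from weak $F$-regularity to the splinter property, which needs the normality (hence domain) conclusion of Hochster--Huneke before Remark \ref{rem:charp-splinters}(2) can be invoked. It is worth emphasizing that, in contrast with the open question of whether weak $F$-regularity itself ascends to $R^h$ in full generality, the weaker conclusion that $R^h$ is a splinter follows unconditionally from Theorem \ref{Theorem-B} without any $G$-ring or excellence hypothesis on $R$.
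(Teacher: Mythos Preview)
Your proof is correct and follows essentially the same approach as the paper, which simply cites Remark \ref{rem:charp-splinters}(2) and Theorem \ref{Theorem-B}. Your additional care in first verifying that $R$ is a domain (via normality from \cite{HH90}) is appropriate, since Remark \ref{rem:charp-splinters}(2) is stated only for domains; the paper's proof leaves this step implicit.
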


\begin{proof}
Weakly $F$-regular local rings are splinters (Remark \ref{rem:charp-splinters}(2)). Now apply Theorem \ref{Theorem-B}.
\end{proof}


\subsection{Completion of a splinter} \label{subsec:Completion of splinters}
We want to show that if $(R,\fm)$ is a $G$-ring, then $\widehat{R}$ is also a splinter. Before we prove this result, we give an example that illustrates that the  completion of a local splinter may not be always be a splinter, even for rings with very mild singularities. Our example comes from the theory of $F$-singularities summarized in Subsection \ref{subsec:splinters-F-singularities}.

\begin{example}
\label{ex:completion-not-splinter}
Let $(R,\fm)$ be a Noetherian local ring of prime characteristic $p > 0$. It is well-known that if $R$ is Gorenstein, then $R$ is weakly $F$-regular if and only if  $R$ is $F$-rational \cite[Corollary 4.7]{HH94(a)}. Loepp and Rotthaus construct an example of a non-excellent Gorenstein local domain $(R,\fm)$  which is weakly $F$-regular (equivalently $F$-rational), but whose completion is not weakly $F$-regular (equivalently $F$-rational) \cite[Section 5]{LR01}. However, any weakly $F$-regular domain of prime characteristic is a splinter (Remark \ref{rem:charp-splinters}(2)). Thus, Loepp and Rotthaus's construction gives a Gorenstein local splinter domain, whose completion is not $F$-rational. Since $\widehat{R}$ is excellent, if it is a splinter, then it will also be $F$-rational by Remark \ref{rem:charp-splinters}(4), which shows that $\widehat{R}$ cannot be a splinter.
\end{example}

The main technical result used in the proof of Theorem \ref{Theorem-C} is \cite[Proposition 5.10]{Smi94}, which is the analogue for completion of Proposition \ref{prop:ideal-identity-hensel}. We need the following minor generalization of Smith's result.

\begin{proposition}[c.f. {\cite[Proposition 5.10]{Smi94}}]
\label{prop:plus-closure-completion}
Suppose $(R,\fm)$ is a Noetherian normal local ring, and let $\widehat{R}$ denote the completion of $R$. If $R$ is a $G$-ring, then for any ideal $I$ of $R$,
\[
I(\widehat{R})^+ \cap R = IR^+ \cap R = I(R^h)^+ \cap R.
\]
\end{proposition}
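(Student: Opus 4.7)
The plan is to reduce to showing the single nontrivial inclusion $I\widehat R^+ \cap R \subseteq I(R^h)^+ \cap R$ and then to attack it by descending the containment through Artin approximation over $R^h$. First, the $G$-ring hypothesis ensures that $R \to \widehat R$ is a regular map, so $\widehat R$ inherits normality from $R$ and, being local, is a normal local domain with a well-defined absolute integral closure $\widehat R^+$. With compatible choices of algebraic closures of fraction fields, one obtains inclusions $R^+ \subseteq (R^h)^+ \subseteq \widehat R^+$, which immediately give the easy chain $IR^+ \cap R \subseteq I(R^h)^+ \cap R \subseteq I\widehat R^+ \cap R$. Combined with Proposition \ref{prop:ideal-identity-hensel}, which identifies the two outermost terms, the proposition reduces to proving the reverse inclusion $I\widehat R^+ \cap R \subseteq I(R^h)^+ \cap R$.

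To verify this, I would fix $z \in I\widehat R^+ \cap R$ and choose a module-finite domain extension $\widehat R \subseteq T \subseteq \widehat R^+$ with $z \in IT$. Presenting $T = \widehat R[y_1,\ldots,y_n]/(f_1,\ldots,f_m)$ with each $y_k$ appearing monically in some $f_j$ (so that $T$ is honestly finite over $\widehat R$), the condition $z \in IT$ translates into a polynomial identity
\[
z - \sum_k b_k Q_k(y) = \sum_j h_j f_j \quad \text{in } \widehat R[y],
\]
where $b_k \in I$ and $Q_k, h_j \in \widehat R[y]$. Equating coefficients of monomials in $y$ yields a finite system of polynomial equations over $R$ in the $\widehat R$-valued unknowns formed by the coefficients of the $f_j, Q_k, h_j$; by construction, this system has a solution in $\widehat R = \widehat{R^h}$.

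Because $R$ is a $G$-ring so is $R^h$, which is also Henselian by construction, so Artin approximation---available for Henselian $G$-rings via Popescu's N\'eron--Popescu desingularization---produces an exact solution in $R^h$ arbitrarily close (in the $\fm R^h$-adic topology) to the $\widehat R$-solution. Holding the leading-coefficient unknowns fixed at $1$ preserves the monic structure of each $f_j$ in $y_k$, so the resulting algebra $T_0 := R^h[y]/(f_1^{(0)},\ldots,f_m^{(0)})$ is a finite $R^h$-algebra satisfying $z \in IT_0$. For close enough approximation $T_0 \otimes_{R^h} \Frac(R^h)$ remains nonzero, so one can pick a minimal prime $\fP_0 \subset T_0$ contracting to $(0) \in \Spec R^h$, yielding a module-finite domain extension $T_0^\flat := T_0/\fP_0$ of $R^h$ with $z \in IT_0^\flat$.

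The conclusion is then immediate: $R^h \hookrightarrow T_0^\flat$ is a module-finite (hence quasi-finite) extension of domains with $R^h$ normal, and $\Spec T_0^\flat \to \Spec R^h$ is surjective by lying-over, so Lemma \ref{lem:quasifin-plus-closure}(2) delivers $z \in I(R^h)^+$. Since $z \in R$, Proposition \ref{prop:ideal-identity-hensel} converts this to $z \in I(R^h)^+ \cap R = IR^+ \cap R$, completing the proof. The principal technical difficulty will lie in the approximation step: packaging the full data of $(T, z \in IT)$ as a single Artin-approximable polynomial system, and then verifying that for sufficiently close approximation the algebra $T_0$ retains a nontrivial generic fiber over $R^h$ so that a minimal prime over $(0)$ exists; once these points are handled, Lemma \ref{lem:quasifin-plus-closure} does the rest of the work.
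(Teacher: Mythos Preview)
Your approach is correct in outline and amounts to unwinding Smith's own proof of \cite[Proposition~5.10]{Smi94} via Artin approximation, rather than citing it. The paper's argument is considerably shorter: it observes that since $R$ is a $G$-ring so is $R^h$, and a \emph{Henselian} $G$-ring is automatically excellent \cite[Expos\'e~I, Corollaire~6.3(ii)]{ILO14}. Thus Smith's result applies verbatim to the map $R^h \to \widehat{R^h} = \widehat{R}$ of excellent normal local domains, yielding $I(R^h)^+ \cap R^h = I(\widehat{R})^+ \cap R^h$; intersecting with $R$ and invoking Proposition~\ref{prop:ideal-identity-hensel} finishes. What you gain by your route is a self-contained argument that does not black-box Smith, and you correctly identify that the only input beyond Proposition~\ref{prop:ideal-identity-hensel} is Artin approximation over $R^h$; what the paper gains is brevity and avoidance of the technical wrinkle you flag.

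That wrinkle---ensuring the approximated algebra $T_0$ has nonzero generic fibre over $R^h$---is a genuine point that must be handled, not merely asserted. One clean way around it: rather than presenting $T$ with possibly redundant relations, take $t_1,\dots,t_s \in T$ with $z = \sum b_k t_k$ and monic $p_k(Y_k)$ over $\widehat{R}$ annihilating $t_k$, and work instead with the free $\widehat{R}$-algebra $S' = \widehat{R}[Y_1,\dots,Y_s]/(p_1,\dots,p_s)$ together with the single extra relation $w := z - \sum b_k Y_k$. The norm $\mathrm{Nm}_{S'/\widehat R}(w)$ is a polynomial in the coefficients of the $p_k$ (and in $z$, $b_k$, which already lie in $R$), and vanishes since $w$ lies in the kernel of $S' \twoheadrightarrow \widehat R[t_1,\dots,t_s]$, a prime contracting to $(0)$. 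Adding this norm-vanishing equation to your Artin system forces the approximation $w_0$ to be a non-unit in $S_0' \otimes_{R^h} \Frac(R^h)$, so $T_0 = S_0'/(w_0)$ has nonzero generic fibre and your step (passing to $T_0/\fP_0$) goes through. Without such a device your argument has a gap at exactly the point you anticipated.
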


\begin{proof}
Recall that we have a chain of faithfully flat maps $R \rightarrow R^h \rightarrow \widehat{R}$, and $\widehat{R}$ is the $\fm R^h$-adic completion of $R^h$. Since $R$ is a $G$-ring, so is its Henselization $R^h$ \cite[Theorem 5.3]{Gre76}. Moreover, a Henselian $G$-ring is excellent \cite[Expos\'e 1, Corollaire 6.3(ii)]{ILO14}, and so, $R^h$ is an excellent, normal local domain. Therefore, $\widehat{R} = \widehat{R^h}$ is also normal.

Let $I$ be an ideal of $R$. The equality 
$$IR^+ \cap R = I(R^h)^+ \cap R $$ follows from Proposition \ref{prop:ideal-identity-hensel}. By the discussion in the previous paragraph, applying \cite[Proposition 5.10]{Smi94} to the map of normal excellent local rings $R^h \rightarrow \widehat{R^h} = \widehat{R}$, we get
\begin{equation}
\label{eq:henselized-plus-closure}
I(R^h)^+ \cap R^h = I(\widehat{R})^+ \cap R^h.
\end{equation}
Intersecting (\ref{eq:henselized-plus-closure}) with $R$ then gives us
\begin{equation*}
IR^+ \cap R = I(R^h)^+ \cap R = (I(R^h)^+ \cap R^h) \cap R \stackrel{(\ref{eq:henselized-plus-closure})}{=} (I(\widehat{R})^+ \cap R^h) \cap R = I(\widehat{R})^+ \cap R,
\end{equation*}
which completes the proof of the proposition.
\end{proof}

\begin{remark}
\label{rem:G-ring-Henselization}
The proof of Proposition \ref{prop:plus-closure-completion} uses that if $(R,\fm)$ is a Noetherian local $G$ ring, then $R^h$ is excellent. The converse is also true. Namely, if $R^h$ is excellent, then $R \rightarrow \widehat{R}$ is regular. To see this, note that $R \rightarrow \widehat{R}$ factors as $R \rightarrow R^h \rightarrow \widehat{R}$, and $\widehat{R}$ is also the completion of $R^h$. Thus $R \rightarrow \widehat{R}$ is regular because $R \rightarrow R^h$ is regular \cite[\href{https://stacks.math.columbia.edu/tag/07QQ}{Tag 07QQ}]{stacks-project}, $R^h \rightarrow \widehat{R}$ is regular by excellence of $R^h$, and the composition of regular maps of Noetherian rings is regular \cite[\href{https://stacks.math.columbia.edu/tag/07QI}{Tag 07QI}]{stacks-project}. 
\end{remark}

We can now prove Theorem \ref{Theorem-C}.

\begin{proof}[Proof of Theorem \ref{Theorem-C}]
Again, by faithfully flat descent of the splinter property, the non-trivial implication is to show that if $(R,\fm)$ is a $G$-ring which is a splinter, then $\widehat{R}$ is also a splinter. 

So suppose $R$ is a splinter. By Lemma \ref{lem:local-splinters-domain}, $R$ is a normal domain. The proof that $\widehat{R}$ is a splinter is now a formal consequence of Proposition \ref{prop:plus-closure-completion} and Lemma \ref{lem:m-primary-ideals-Henselization},  with the argument proceeding in the same manner as in the proof of Theorem \ref{Theorem-B}, upon replacing $R^h$ by $\widehat{R}$ and $(R^h)^+$ by $(\widehat{R})^+$. Thus, the details are omitted.
\end{proof}

Theorem \ref{Theorem-C} allows us to generalize results on excellent local splinters in prime characteristic to splinters that are $G$-rings.

\begin{corollary}
\label{cor:G-splinters-CM-F-rational}
Let $(R,\fm)$ be a Noetherian local $G$-ring of prime characteristic. If $R$ is a splinter, then we have the following:
\begin{enumerate}
	\item $R$ is Cohen--Macaulay.
	\item If $R$ is also Gorenstein, then $R$ is weakly $F$-regular, and hence $F$-rational.
	\end{enumerate} 
\end{corollary}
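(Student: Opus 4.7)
The plan is to reduce both parts to the excellent case, in which the desired conclusions are already recorded in Remark \ref{rem:charp-splinters}, and then descend the relevant properties from $\widehat R$ back to $R$ via faithful flatness. Since $R$ is a Noetherian local $G$-ring, the map $R \to \widehat R$ is regular, so Theorem \ref{Theorem-C} applies and $\widehat R$ is again a splinter. Moreover $\widehat R$ is a complete Noetherian local ring, hence excellent, so the statements in Remark \ref{rem:charp-splinters} concerning locally excellent prime characteristic splinters are available for $\widehat R$.

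For part (1), Remark \ref{rem:charp-splinters}(3) gives that the excellent splinter $\widehat R$ is Cohen--Macaulay, and Cohen--Macaulayness descends along the faithfully flat map $R \to \widehat R$, so $R$ is Cohen--Macaulay.

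For part (2), if $R$ is Gorenstein then so is $\widehat R$, because $R \to \widehat R$ is flat local with closed fiber a field. By Remark \ref{rem:charp-splinters}(4) the excellent splinter $\widehat R$ is $F$-rational, and for Gorenstein rings $F$-rationality coincides with weak $F$-regularity by \cite[Corollary 4.7]{HH94(a)}. The only step that cannot be quoted directly from earlier in the paper, and which I expect to be the main (minor) obstacle, is the descent of weak $F$-regularity from $\widehat R$ to $R$. For this, suppose $x \in I^{*}$ in $R$, witnessed by a nonzero $c \in R$ with $cx^{p^e} \in I^{[p^e]}$ for all $e \gg 0$. The same containment persists in $\widehat R$, and $c$ remains nonzero there because $\widehat R$ is a domain (splinters are domains by Lemma \ref{lem:local-splinters-domain}) and $R \hookrightarrow \widehat R$ is injective. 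Weak $F$-regularity of $\widehat R$ then yields $x \in I\widehat R$, and faithful flatness gives $x \in I\widehat R \cap R = I$. Hence $R$ is weakly $F$-regular, and in particular $F$-rational.
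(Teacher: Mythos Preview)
Your proof is correct and follows essentially the same route as the paper: pass to $\widehat{R}$ via Theorem \ref{Theorem-C}, apply Remark \ref{rem:charp-splinters}(3) and (4) to the excellent ring $\widehat{R}$, and then descend. The only difference is that for the descent of weak $F$-regularity in part (2) the paper simply cites \cite[Proposition 4.12]{HH90}, whereas you supply the short direct argument (which is precisely the content of that proposition in the domain case).
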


\begin{proof}
(1) Since $\widehat{R}$ is an excellent local splinter, it is Cohen--Macaulay by Remark \ref{rem:charp-splinters}(3). Thus, $R$ is also Cohen--Macaulay.

(2) Since $\widehat{R}$ is an excellent splinter, it is $F$-rational by Remark \ref{rem:charp-splinters}(4). However, a Gorenstein $F$-rational local ring is weakly $F$-regular \cite[Corollary 4.7]{HH94(a)}. Thus $\widehat{R}$ is weakly $F$-regular, and since weak $F$-regularity descends under faithfully flat maps \cite[Proposition 4.12]{HH90}, it follows that $R$ is weakly $F$-regular. Consequently, $R$ is also $F$-rational.
\end{proof}

\begin{remark}
Since the completion of a regular local ring is always regular, the direct summand theorem shows that the completion of a regular local ring is always a splinter, even though it is not difficult to construct regular local rings whose formal fibers are not geometrically regular. Thus, while $G$-rings are sufficient for the splinter property to ascend under completions, they are by no means necessary.
\end{remark}

As another prime characteristic consequence of Theorem \ref{Theorem-C}, 
we can partially remove finite type assumptions from a result of Ma 
that relates the (derived) splinter condition to a vanishing 
condition on maps of Tor.

\begin{corollary}[c.f. {\cite[Theorem 5.5]{Ma18}}]
\label{splinters-tor-vanishing}
Let $S$ be a (derived) splinter of prime characteristic $p > 0$ that is a $G$-ring
(for example, if $S$ is locally excellent).
Consider a diagram of rings $A \rightarrow R \rightarrow S$ of the same characteristic 
where $A$
is regular and $R$ is a module-finite torsion free $A$-algebra. Then for any $A$-
module $M$, the induced map 
\[
\Tor^A_i(M,R) \rightarrow \Tor^A_i(M,S)
\]
is the zero map for all $i \geq 1$.
\end{corollary}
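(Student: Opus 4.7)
The plan is to reduce the statement to \cite[Theorem 5.5]{Ma18}, which handles the locally excellent case, by passing at each maximal ideal of $S$ to the completion and invoking Theorem \ref{Theorem-C} to preserve the splinter condition.

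First, I would exploit that the target $\Tor^A_i(M, S)$ is naturally an $S$-module, so the $A$-linear map $\beta \colon \Tor^A_i(M, R) \to \Tor^A_i(M, S)$ vanishes if and only if, for every $\fn \in \mathrm{MaxSpec}(S)$, the composition
\[
\Tor^A_i(M, R) \xrightarrow{\beta} \Tor^A_i(M, S) \longrightarrow \Tor^A_i(M, S) \otimes_S S_\fn = \Tor^A_i(M, S_\fn)
\]
is zero, where the final identification uses flatness of $S \to S_\fn$. Each such $S_\fn$ is a Noetherian local splinter by Lemma \ref{lem:splinters-local} and is a $G$-ring, since the $G$-ring property localizes \cite[Th\'eor\`eme 5.3.1]{ILO14}. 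Applying Theorem \ref{Theorem-C}, the completion $\widehat{S_\fn}$ is again a splinter, and it is automatically excellent as a complete Noetherian local ring. Because $S_\fn \to \widehat{S_\fn}$ is faithfully flat, flat base change produces an injection $\Tor^A_i(M, S_\fn) \hookrightarrow \Tor^A_i(M, \widehat{S_\fn})$, further reducing the problem to showing that $\Tor^A_i(M, R) \to \Tor^A_i(M, \widehat{S_\fn})$ vanishes.

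At this point I would directly invoke \cite[Theorem 5.5]{Ma18} for the diagram $A \to R \to \widehat{S_\fn}$: $A$ is regular, $R$ is module-finite torsion free over $A$, and $\widehat{S_\fn}$ is an excellent splinter of prime characteristic, hence by Bhatt's \cite[Theorem 1.4]{Bha12} also a derived splinter. This yields the desired vanishing for each $\fn$, and consequently the original map is zero for every $i \geq 1$. The substantive new ingredient beyond Ma's theorem is Theorem \ref{Theorem-C}, which permits the replacement of $S_\fn$ by its excellent completion without losing the splinter hypothesis; apart from this, the argument is essentially a formal localization-and-completion reduction.
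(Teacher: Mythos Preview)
Your argument is correct and takes a somewhat different route from the paper's. Both proofs rest on the same essential new input, Theorem \ref{Theorem-C}, to pass from a local $G$-ring to its excellent completion without losing the splinter hypothesis. After that point, however, the paper does not cite \cite[Theorem 5.5]{Ma18} as a black box. Instead it invokes the reduction machinery from the proof of \cite[Theorem 4.1]{HH95} to simultaneously localize and complete $A$ and $S$ (and pass to suitable quotient domains of $R$), reducing to the case where $A,R,S$ are domains with $A$ and $S$ complete local; it then reproves the vanishing directly by observing that the composition $\Tor^A_i(M,R)\to\Tor^A_i(M,S)\to\Tor^A_i(M,S^+)$ factors through $\Tor^A_i(M,R^+)=\Tor^A_i(M,A^+)$, which is zero for $i\geq 1$ because $A^+$ is $A$-flat for excellent regular $A$ in characteristic $p$ \cite[(6.7)]{HH92}, and purity of $S\to S^+$ forces $\Tor^A_i(M,S)\hookrightarrow\Tor^A_i(M,S^+)$. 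Your version is more modular---you manipulate only $S$ and outsource the endgame to Ma---whereas the paper's version is self-contained and makes explicit where the characteristic $p$ hypothesis enters. One caveat worth checking in your write-up: confirm that the stated hypotheses of \cite[Theorem 5.5]{Ma18} are met by the diagram $A\to R\to\widehat{S_\fn}$ with $A$ merely regular Noetherian; the paper's more elaborate reduction also completes $A$ precisely so that $A$ becomes excellent, which is what the underlying Hochster--Huneke argument requires.
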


\begin{proof}
The main point is that Theorem \ref{Theorem-C} allows us to reduce the proof
of this corollary to the case where $A$ is excellent and $A,R,S$ are domains, at which 
stage one can adapt an argument of Hochster and Huneke from the case where $S$ is regular. 
First note that the distinction between derived splinters and splinters is superfluous 
by Bhatt's result \cite[Theorem 1.4]{Bha12}. So we work with the splinter condition. 
Since the completion of any localization of $S$ at a prime ideal preserves the splinter
property by Lemma \ref{lem:splinters-local} and Theorem \ref{Theorem-C}, the proof
of \cite[Theorem 4.1]{HH95} shows that one can reduce to the case where both $A$
and $S$ are complete local (in particular, $A$ is excellent) and $A, R, S$ are domains.
The composite map 
\begin{equation}
\label{eq:tor}
\Tor^A_i(M,R) \rightarrow \Tor^A_i(M,S) \rightarrow \Tor^A_i(M,S^+)
\end{equation}
factors through $\Tor^A_i(M,R^+)$ because the map of domains $R \rightarrow S$
always induces a map $R^+ \rightarrow S^+$ \cite[Proposition (1.2)]{HH95}. We can 
identify $R^+$ with $A^+$ because $R$ is a module-finite extension of $A$.
Since $A$ is an excellent regular ring of prime characteristic $p > 0$, it follows
that $A^+$ is a flat $A$-algebra \cite[(6.7)]{HH92}. 
Thus, $\Tor^A_i(M,R^+) = \Tor^A_i(M,A^+) = 0$
for all $i \geq 1$. This means that (\ref{eq:tor}) is the zero map for all $i \geq 1$.
But $S \rightarrow S^+$ is a pure map because $S$ is a splinter. Thus,
$\Tor^A_i(M,S) \rightarrow \Tor^A_i(M,S^+)$ is injective for all $i$ by 
\cite[Lemma $2.1^\circ$]{HR76}, which forces $\Tor^A_i(M,R) \rightarrow \Tor^A_i(M,S)$
to be the zero map for $i\geq 1$.
\end{proof}

\begin{remark}
\label{weakly-functorial-BCM}
{\*}
\begin{enumerate}
\item 
In \cite[Corollary 4.5]{Ma18}, Ma proves a partial converse of Corollary 
\ref{splinters-tor-vanishing}.
Namely, if $S$ is a homomorphic image of a regular ring\footnote{Ma states 
\cite[Corollary 4.5]{Ma18} for $S$ that is complete local,
or local and essentially of finite type over a field, although his proof only uses 
that $S$ is a homomorphic image of a regular ring.} (for example, if 
$S$ is essentially of finite type over a complete local ring, or $F$-finite by a 
result of Gabber \cite[Remark 13.6]{Gab04}) and satisfies the vanishing
condition on maps of $\Tor$ for all diagrams $A \rightarrow R \rightarrow S$ with 
assumptions as in the statement of Corollary \ref{splinters-tor-vanishing}, then $S$
is a splinter. Moreover, Ma's argument is independent of the characteristic 
of $S$ as long as all rings $A, R, S$ have the same characteristic. However, we do 
not know if the converse of Corollary \ref{splinters-tor-vanishing} holds when $S$ 
is a $G$-ring of prime characteristic that
is not a homomorphic image of a regular ring. Note that even in the class of excellent
local rings there exist ones that are not homomorphic images of regular (or Gorenstein) local rings
by Nishimura's example of an excellent local ring that does not possess a dualizing
complex \cite[Example 6.1]{Nis12}. 

\item A key point in the proof of Corollary \ref {splinters-tor-vanishing} is that excellent local splinters $R$ of prime characteristic admit pure maps to big Cohen--Macaulay $R$-algebras in a weakly functorial fashion. Recent work of Bhatt \cite{Bha20}
implies that a similar result holds in mixed characteristic.
Suppose $(R,\frak m)$ is a Noetherian local splinter which is a $G$-ring and which has residue characteristic $p > 0$. Define
$\mathcal{B}(R)$ to be the $p$-adic completion of $(\widehat{R}^{\frak m})^+$, 
where the latter is the absolute integral closure of the $\fm$-adic 
completion of $R$. The notation $(\widehat{R}^{\frak m})^+$ makes
sense because $\widehat{R}^{\frak m}$ is a splinter by
Theorem \ref{Theorem-C}, and hence, a domain.
The ring $\mathcal{B}(R)$ is a Cohen-Macaulay
$\widehat{R}^{\frak m}$-algebra by \cite[Corollary 5.17]{Bha20},
and so, is also a Cohen--Macaulay $R$-algebra. We now claim that
$R \rightarrow \mathcal{B}(R)$ is pure.
First note that
the composition $\widehat{R}^{\frak m} \rightarrow (\widehat{R}^{\frak m})^+ \rightarrow \mathcal{B}(R)$ is pure. Indeed, 
since $\widehat{R}^{\frak m}$ is normal, it suffices to show
by Lemma \ref {lem:normal-approx-Gor} that for any $\frak{m}\widehat{R}^{\frak m}$-primary ideal $I$, the induced map
\[
\widehat{R}^{\frak m}/I \rightarrow (\widehat{R}^{\frak m})^+/I (\widehat{R}^{\frak m})^+ \rightarrow \mathcal{B}(R)/I \mathcal{B}(R)
\] 
is injective. The first map $\widehat{R}^{\frak m}/I \rightarrow (\widehat{R}^{\frak m})^+/I (\widehat{R}^{\frak m})^+$ is injective because $\widehat{R}^{\frak m}$ is a splinter, and so, $\widehat{R}^{\frak m} \rightarrow (\widehat{R}^{\frak m})^+$ is pure. We claim that
the second map $(\widehat{R}^{\frak m})^+/I (\widehat{R}^{\frak m})^+ \rightarrow \mathcal{B}(R)/I \mathcal{B}(R)$ is an isomorphism. Recall that $\mathcal{B}(R)$ is the 
$p$-adic completion of $(\widehat{R}^{\frak m})^+$, and choose some $p^n \in I$, for $n > 0$. Then
$(\widehat{R}^{\frak m})^+/I (\widehat{R}^{\frak m})^+ \rightarrow \mathcal{B}(R)/I \mathcal{B}(R)$
is a base change of 
\[
(\widehat{R}^{\frak m})^+/p^n(\widehat{R}^{\frak m})^+ \rightarrow \mathcal{B}(R)/p^n \mathcal{B}(R),
\] 
which is an isomorphism by \cite[\href{https://stacks.math.columbia.edu/tag/05GG}{Tag 05GG}]{stacks-project}. Thus, $\widehat{R}^{\frak m} \rightarrow \mathcal{B}(R)$ is pure, and consequently, so is the composition
$R \rightarrow \widehat{R}^{\frak m} \rightarrow \mathcal{B}(R)$.
Finally, for a local homomorphism $(R,\frak m) \rightarrow (S,\frak n)$ of local rings
of residue characteristic $p$ whose respective completions are
domains, we get an induced map
$\mathcal{B}(R) \rightarrow \mathcal{B}(S)$ by functoriality of ideal-adic completions and weak functoriality of 
absolute integral closures.
\end{enumerate}

\end{remark}

\subsection{\'Etale ascent}\label{subsec:\'Etale ascent}
To prove the ascent of the splinter property under essentially \'etale maps (Theorem \ref{Theorem-A}), we will reduce to the finite \'etale case. For the reduction, we need the following result which is well-known to experts. However, a proof is included for the sake of completeness.

\begin{lemma}
\label{lem:Henselization-unramified-maps}
Let $\varphi\colon (R, \fm) \rightarrow (S, \fn)$ be a local homomorphism of local rings (not necessarily Noetherian). Suppose $S$ is the localization of a finite type $R$-algebra $S'$ at a prime ideal $\fq$ such that $R \rightarrow S'$ is quasi-finite at $\fq$. Then the induced map $\varphi^h: R^h \rightarrow S^h$ is finite. 

\noindent In particular, if $\varphi\colon (R, \fm) \rightarrow (S, \fn)$ is an \'etale homomorphism of local rings, then $\varphi^h\colon R^h \rightarrow S^h$ is a finite \'etale map.
\end{lemma}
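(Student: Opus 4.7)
My plan is to reduce the quasi-finite case to the finite case using Zariski's Main Theorem, then exploit the decomposition of finite algebras over Henselian local rings. Specifically, since $R \to S'$ is quasi-finite at $\fq$, the discussion following Lemma \ref{lem:quasifin-plus-closure} (via \cite[\href{https://stacks.math.columbia.edu/tag/00Q9}{Tag 00Q9}]{stacks-project} and \cite[\href{https://stacks.math.columbia.edu/tag/00QB}{Tag 00QB}]{stacks-project}) produces an element $g \in S'$ with $g \notin \fq$ and a finite $R$-subalgebra $T \subseteq S'_g$ with $T_g = S'_g$. Setting $\fp := \fq \cap T$, we obtain $S = S'_{\fq} = T_{\fp}$ with $T$ finite over $R$ and $\fp$ a prime of $T$ lying over $\fm$. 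Thus we may assume throughout that $S$ is the localization of a finite $R$-algebra $T$ at a prime over $\fm$.

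Next, I would form $T \otimes_R R^h$, which is a finite $R^h$-algebra. Since $R^h$ is Henselian, the standard structure theorem (\cite[\href{https://stacks.math.columbia.edu/tag/04GH}{Tag 04GH}]{stacks-project}, or more directly the fact that idempotents lift over Henselian pairs) yields a decomposition
\[
T \otimes_R R^h \;\cong\; \prod_{i=1}^n T_i,
\]
where each $T_i$ is a local Henselian ring finite over $R^h$. The factors correspond bijectively to the maximal ideals of $T \otimes_R R^h$ above $\fm R^h$, which, via $(T \otimes_R R^h)/\fm R^h = T/\fm T$, match the primes of $T$ above $\fm$. Let $T_1$ be the factor corresponding to $\fp$.

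The heart of the argument is identifying $T_1$ with $S^h$. The ring $T_1$ is local and Henselian, finite over $R^h$, has residue field $k(\fp) = k(\fn)$, and receives a canonical local $S$-algebra map from $S = T_\fp$ obtained by composing $T \to T \otimes_R R^h \twoheadrightarrow T_1$ with the localization. The universal property of Henselization then produces a local $S$-algebra map $S^h \to T_1$. Conversely, by the universal property applied to $R \to S^h$, the ring $S^h$ is canonically an $R^h$-algebra, inducing a map $S \otimes_R R^h \to S^h$; because $S/\fm S = k(\fn)$ has a unique maximal ideal above $\fm R^h$ in $S \otimes_R R^h$, this map factors through the corresponding local factor, namely $T_1$, yielding $T_1 \to S^h$. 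The uniqueness clauses in both universal properties force the two composites to be the identity, giving $T_1 \cong S^h$. Consequently $R^h \to S^h$ is finite.

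Finally, for the \'etale case, the reduction above still applies because \'etale local maps are essentially \'etale (Remark \ref{rem:local-etale-is-essentially-etale}) and quasi-finite (\cite[\href{https://stacks.math.columbia.edu/tag/00U5}{Tag 00U5}]{stacks-project}). Once finiteness is established, \'etaleness follows by observing that $S \otimes_R R^h$ is \'etale over $R^h$ (base change of essentially \'etale), and its unique local factor $T_1$ above $\fm R^h$ is a localization of $S \otimes_R R^h$, hence remains \'etale over $R^h$. I expect the main obstacle to be the identification in the third paragraph: one must be careful to verify that the local factor $T_1$ has the correct residue field and that the two universal-property maps are genuinely mutually inverse; a calculation of residue fields and of the preimage of $\fn S^h$ in $S \otimes_R R^h$ will be needed to pin this down.
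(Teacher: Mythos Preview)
Your approach is correct and close in spirit to the paper's, though the order of operations differs. The paper does not first reduce to the finite case via Zariski's Main Theorem; instead it base-changes the quasi-finite map $R \to S'$ along $R \to R^h$, invokes \cite[\href{https://stacks.math.columbia.edu/tag/05WP}{Tag 05WP}]{stacks-project} to identify $S^h$ directly as the localization of $R^h \otimes_R S$ at the expansion of $\fq$, and then applies the structure theorem for quasi-finite algebras over a Henselian base \cite[\href{https://stacks.math.columbia.edu/tag/04GG}{Tag 04GG}, part (13)]{stacks-project} to find $f$ with $(R^h \otimes_R S')_f$ finite over $R^h$; this ring then splits as a product of Henselian local factors, one of which is $S^h$. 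Your route---ZMT first to get a finite $T$, then base-change, then match $T_1$ with $S^h$ via universal properties---reaches the same conclusion with the reduction step and the Henselian decomposition transposed. The paper's use of Tag 05WP replaces your hand-built inverse maps and spares the bookkeeping you anticipate in your final sentence.

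One correction: in the general quasi-finite case the claim $S/\fm S = k(\fn)$ is false (it holds only when $\varphi$ is unramified). What you actually need---and what is true---is that $S/\fm S = (T/\fm T)_{\fp}$ is a \emph{local} Artinian ring, so that $S \otimes_R R^h$ has a unique maximal ideal above $\fm R^h$. Also, $T_1$ is a priori a factor of $T \otimes_R R^h$, not of $S \otimes_R R^h$; to make your factoring step precise you should note that $S \otimes_R R^h = \prod_i (T_i)_{\fp}$ and that $(T_1)_{\fp} = T_1$, so $T_1$ is genuinely a direct summand of $S \otimes_R R^h$. With these adjustments your argument is complete.
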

 
\begin{proof}
Note that $\fq$ lies over $\fm$ because $\fn$ lies over $\fm$. Since the locus of primes of $\Spec(S')$ at which $\Spec(S') \rightarrow \Spec(R)$ is quasi-finite is open \cite[\href{https://stacks.math.columbia.edu/tag/00QA}{Tag 00QA}]{stacks-project}, we may assume $R \rightarrow S'$ is quasi-finite. Then by base change, $R^h \rightarrow R^h \otimes_R S'$ is also quasi-finite \cite[\href{https://stacks.math.columbia.edu/tag/00PP}{Tag 00PP, part (3)}]{stacks-project}. Note that 
\[
(R^h \otimes_R S') \otimes_{S'} \kappa(\fq) = 
 (R^h \otimes_R \kappa(\fm)) \otimes_{\kappa(\fm)} \kappa(\fq) = \kappa(\fq).
\] 
Hence the expansion of $\fq$ in $R^h \otimes_R S = (R^h \otimes_R S') \otimes_{S'} S'_\fq$ is a prime ideal of $R^h \otimes_R S$, which implies by \cite[\href{https://stacks.math.columbia.edu/tag/05WP}{Tag 05WP}]{stacks-project} that 
\[
S^h  = (R^h \otimes_R S)_ {\fq(R^h \otimes_R S)}.
\] 

Let $\fQ$ be the contraction of the maximal ideal of $S^h$ to $R^h \otimes_R S'$.
It is clear that $\fQ$ lies over the maximal ideal $\fm R^h$ of $R^h$. 
Since $R^h$ is Henselian and $R^h \rightarrow R^h \otimes_R S'$ is quasi-finite, \cite[$\mathsection 2.3$, Proposition 4(e)]{BLR90} (see also \cite[\href{https://stacks.math.columbia.edu/tag/04GG}{Tag 04GG}, part (13)]{stacks-project}) shows that we can choose $f \notin \fQ$ such that 
$$(R^h \otimes_R S')_f$$ 
is a finite $R^h$-algebra, which consequently decomposes as a finite product of $R^h$-module finite Henselian local rings \cite[\href{https://stacks.math.columbia.edu/tag/04GG}{Tag 04GG}, part (10) and \href{https://stacks.math.columbia.edu/tag/04GH}{Tag 04GH}]{stacks-project}. Since $\fQ$ lies over the maximal ideal $\fm R^h$ of $R^h$, the  prime ideal $\fQ (R^h \otimes_R S')_f$ is maximal in $(R^h \otimes_R S')_f$ by module finiteness of $R^h \rightarrow (R^h \otimes_R S')_f$. Thus, $S^h = (R^h \otimes_R S)_ {\fq(R^h \otimes_R S)} = (R^h \otimes_R S')_{\fQ}$ coincides with one of the $R^h$-module-finite factors of $(R^h \otimes_R S')_f$, proving the first assertion.

If $\varphi$ is an \'etale homomorphism of local rings, we may assume that $S'$ is an \'etale $R$-algebra. Then $S^h$ is a localization of the \'etale $R^h$-algebra $R^h \otimes_R S'$, and so $\varphi^h$ is an \'etale homomorphism of local rings which is finite by what we proved above because \'etale maps are quasi-finite \cite[\href{https://stacks.math.columbia.edu/tag/00U5}{Tag 00U5}]{stacks-project}.
\end{proof}

\begin{remark}
The analogue of Lemma \ref{lem:Henselization-unramified-maps} also holds when we replace `Henselizations' by `completions' in the Noetherian setting, i.e., if $(R,\fm) \rightarrow (S,\fn)$ is an \'etale (even unramified) homomorphism of Noetherian local rings, then the induced map on completions $\widehat{R}^\fm \rightarrow \widehat{S}^\fn$ is module-finite. This fact is easier to prove \cite[\href{https://stacks.math.columbia.edu/tag/039H}{Tag 039H, part (1)}]{stacks-project}.
\end{remark}

We finally have all the tools in our arsenal to prove Theorem \ref{Theorem-A}.

\begin{proof}[Proof of Theorem \ref{Theorem-A}]
Let $\varphi\colon R \rightarrow S$ be an essentially \'etale homomorphism of Noetherian rings such that $R$ is a splinter. We want to show that $S$ is a splinter. Let $\fq$ be a prime ideal of $S$. By Lemma \ref{lem:splinters-local}, it suffices to show $S_\fq$ is a splinter. Replacing $\varphi\colon R \rightarrow S$ by the induced map $R_{\varphi^{-1}(\fq)} \rightarrow S_\fq$, we may assume $R = (R, \fm)$ and $S = (S, \fn)$ are local, and $\varphi$ is an \'etale local homomorphism of Noetherian local rings. Consider the commutative diagram
\[
\begin{tikzcd}
  R \arrow[r, "\varphi"] \arrow[d]
    & S \arrow[d] \\
  R^h \arrow[r, "\varphi^h"]
& S^h 
\end{tikzcd},
\]
where the vertical maps are faithfully flat (hence pure) and $\varphi^h$ is finite \'etale by Lemma \ref{lem:Henselization-unramified-maps}. Note that $R^h$ is a splinter by Theorem \ref{Theorem-B}, and so, to show that $S$ is a splinter, it is enough to show $S^h$ is a splinter by faithfully flat descent (see Lemma \ref{lem:absolute-int-cl-splinter}). 

Thus, we reduce the proof of Theorem \ref{Theorem-A} to the case where $\varphi \colon (R, \fm) \rightarrow (S, \fn)$ is a finite \'etale local homomorphism of Noetherian local rings. In particular, $\varphi$ induces a surjective map on $\Spec$ since it is faithfully flat. Let $\phi \colon S \rightarrow T$ be a finite map such that the induced map on $\Spec$ is surjective. In order to show that $\phi$ splits, it suffices to exhibit an $S$-linear map $T \rightarrow S$ that maps $1 \in T$ to a unit in $S$.

The composition $R \xrightarrow{\varphi} S \xrightarrow{\phi} T$ is a finite map and $\Spec(\phi \circ \varphi)$ is surjective. Thus, since $R$ is a splinter, there exists an $R$-linear map
\[
g \colon T \rightarrow R
\]
such that $g(1) = 1$. The retraction $g$ induces an $S$-linear map
\[
\Psi_g \colon T \rightarrow \Hom_R(S,R)
\]
defined as follows: for all $t \in T$ and $s \in S$,
\[
\Psi_g(t)(s) \coloneqq g(\phi(s)t).
\]
We then have a commutative diagram
\[
\begin{tikzcd}[column sep=large]
  T \arrow[r, "\Psi_g"] \arrow[dr, "g"]
    & \Hom_R(S,R) \arrow[d, "\ev_1"]\\
&R 
\end{tikzcd}
\]
where the vertical map is evaluation at $1$. Now by Grothendieck duality for a proper smooth map \cite[Chapter VII, Theorem 4.1]{Har66} applied to the finite \'etale map $\varphi$, we have isomorphisms of $S$-modules
\[
\Hom_R(S,R) = \Hom_R(\varphi_*S,R) \cong \varphi_*(\Hom_S(S,\varphi^!R)) = \Hom_S(S,\varphi^!R).
\]
Moreover, since $\varphi$ is \'etale, we know that the functor $\varphi^!$ coincides with the pullback functor $\varphi^*$ because the relative canonical bundle of an \'etale map is trivial (see the definition of $f^!$ for a smooth map given on \cite[Chapter VII, $\mathsection 4$, Pg 388]{Har66}). Thus, we have isomorphisms of $S$-modules
\[
\Hom_R(S,R) \cong \Hom_S(S,\varphi^!R) \cong \Hom_S(S,\varphi^*R) = \Hom_S(S,S) \cong S.
\]
In particular, we then get a commutative diagram
\[
\begin{tikzcd}[column sep=large]
  T \arrow[r, "\widetilde{\Psi_g}"] \arrow[dr, "g"]
    & S \arrow[d, "\widetilde{\ev_1}"]\\
&R 
\end{tikzcd}
\]
where $\widetilde{\Psi_g}$ is $S$-linear and $\widetilde{\ev_1}$ is $R$-linear. We claim that 
\[
\widetilde{\Psi_g}(1) \notin \fn.
\]
Indeed, if $\widetilde{\Psi_g}(1) \in \fn = \fm S$, then by $R$-linearity of $\widetilde{\ev_1}$, we get
\[
1 = g(1) = \widetilde{\ev_1}(\widetilde{\Psi_g}(1)) \in \widetilde{\ev_1}(\fm S) \subseteq \fm,
\]
which is a contradiction. Thus, $\widetilde{\Psi_g}(1) \notin \fn$, which shows that $\widetilde{\Psi_g}$ is an $S$-linear map that sends $1 \in T$ to a unit in $S$. But this is precisely what we wanted to show.
\end{proof}

\begin{corollary}
\label{cor:splinter-strict-Henselization}
Let $(R,\fm)$ be a Noetherian local ring. Then $R$ is a splinter if and only if its strict Henselization $R^{sh}$ is a splinter.
\end{corollary}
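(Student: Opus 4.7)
The forward direction is immediate from faithfully flat descent of the splinter property: since $R\to R^{sh}$ is faithfully flat, Proposition \ref{prop:splinters-descend-cyc-pur}(1) shows that if $R^{sh}$ is a splinter then so is $R$. So the substance of the proof lies in the reverse implication; I will assume $R$ is a splinter and deduce that $R^{sh}$ is a splinter.

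The main idea is to reduce to Theorem \ref{Theorem-A} via a descent argument through the filtered colimit presentation of the strict Henselization. First, by Theorem \ref{Theorem-B}, $R^h$ is a splinter. Since $R^h$ is Henselian, any \'etale homomorphism of local rings $R^h \to S$ (with $S \subseteq R^{sh}$) is automatically module-finite by Lemma \ref{lem:Henselization-unramified-maps}, so we may write
\[
R^{sh} \;=\; \varinjlim_{\alpha} S_\alpha,
\]
where each $S_\alpha$ is a finite \'etale local $R^h$-algebra contained in $R^{sh}$. By Theorem \ref{Theorem-A} applied to the finite \'etale map $R^h \to S_\alpha$, each $S_\alpha$ is a splinter. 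Each transition map $S_\alpha\to S_\beta$ is also finite \'etale, and every map $S_\alpha \to R^{sh}$ is faithfully flat (flat as a filtered colimit of flat maps, and faithful since it is a local homomorphism of local rings).

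Now let $\phi\colon R^{sh}\to T$ be a finite ring map that is surjective on $\Spec$. The plan is to descend $\phi$ to one of the $S_\alpha$'s. Since $R^{sh}$ is Noetherian, $T$ is a finitely presented $R^{sh}$-algebra admitting a presentation $T\cong R^{sh}[X_1,\dots,X_n]/(f_1,\dots,f_k)$ in which finitely many of the $f_i$ are monic polynomials in the individual $X_j$'s (witnessing module-finiteness). All the coefficients of the $f_i$ lie in some $S_\alpha$, so by enlarging $\alpha$ we may set $T_\alpha := S_\alpha[X_1,\dots,X_n]/(f_1,\dots,f_k)$ and obtain a finite $S_\alpha$-algebra with $T \cong T_\alpha \otimes_{S_\alpha} R^{sh}$. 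To check that $\Spec(T_\alpha) \to \Spec(S_\alpha)$ is surjective, note that since $S_\alpha$ is local with maximal ideal $\fn_\alpha$ and $T_\alpha$ is finite over $S_\alpha$, it suffices to show $T_\alpha/\fn_\alpha T_\alpha \neq 0$; but $(T_\alpha/\fn_\alpha T_\alpha)\otimes_{S_\alpha} R^{sh} = T/\fm^{sh} T \neq 0$ by surjectivity of $\Spec(T)\to\Spec(R^{sh})$, so faithful flatness of $S_\alpha\to R^{sh}$ forces $T_\alpha/\fn_\alpha T_\alpha \neq 0$.

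Since $S_\alpha$ is a splinter, the finite surjective-on-$\Spec$ map $S_\alpha\to T_\alpha$ splits as a map of $S_\alpha$-modules. Tensoring this splitting over $S_\alpha$ with $R^{sh}$ produces a splitting of $R^{sh}\to T_\alpha \otimes_{S_\alpha} R^{sh} = T$, completing the proof. The one step that requires care is the descent of $T$ to some $T_\alpha$ that is simultaneously \emph{module-finite} and \emph{surjective on $\Spec$} over $S_\alpha$; this is the main (though standard) obstacle, and it is resolved by descending a finite presentation containing the required monic relations, followed by the faithful flatness argument above.
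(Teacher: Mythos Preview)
Your proof is correct and follows essentially the same approach as the paper: descend the finite map $R^{sh}\to T$ through the filtered colimit presentation of $R^{sh}$ to a finite surjective-on-$\Spec$ map over an essentially \'etale local $R$-algebra, apply Theorem~\ref{Theorem-A} to see that algebra is a splinter, and base-change the resulting splitting back to $R^{sh}$. Your detour through $R^h$ via Theorem~\ref{Theorem-B} is harmless but unnecessary (the paper applies Theorem~\ref{Theorem-A} directly to the \'etale local $R$-algebras $S$ in the colimit), and your citation of Lemma~\ref{lem:Henselization-unramified-maps} to conclude that \'etale local extensions of the Henselian ring $R^h$ are module-finite is slightly imprecise---the claim is true, but it follows from the \emph{proof} of that lemma (or the standard fact that quasi-finite algebras over a Henselian local ring decompose with a finite factor at each closed point) rather than from its statement, which only concludes finiteness after passing to Henselizations.
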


\begin{proof}
If $R^{sh}$ is a splinter, so too is $R$ by faithfully flat descent. Recall that $R^{sh}$ is a filtered colimit of pairs $(S,\fn)$, where $(R,\fm) \rightarrow (S,\fn)$ is an \'etale homomorphism of Noetherian local rings and $\kappa(\fn)$ is contained in a fixed choice of a separable algebraic closure of $\kappa(\fm)$. Let $R^{sh} \rightarrow T$ be a finite map such that the induced map $\Spec(T) \rightarrow \Spec(R^{sh})$ is surjective. Since $T$ is a finitely presented $R^{sh}$-algebra, there exists a model $T_S$ of $T$ over one of the pairs $(S,\fn)$ and a fibered square
\[
\begin{tikzcd}
  \Spec(T) \arrow[r] \arrow[d, twoheadrightarrow]
    & \Spec(T_S) \arrow[d] \\
  \Spec(R^{sh}) \arrow[r, twoheadrightarrow]
& \Spec(S). \end{tikzcd}
\]
Note that the map $(S,\fn) \rightarrow (R^{sh},\fm)$ is faithfully flat since $R^{sh}$ is also the strict Henselization of $S$. Thus, by faithfully flat descent of finite generation \cite[Chapter 1, $\mathsection 3.6$, Proposition 11]{Bou89}, we have that $T_S$ is a finitely generated $S$-module. Moreover, by commutativity of the above diagram, $\Spec(T_S) \rightarrow \Spec(S)$ is surjective. Since $R$ is a splinter and $S$ is an essentially \'etale extension of $R$, Theorem \ref{Theorem-A} implies $S$ is also a splinter. Therefore $S \rightarrow T_S$ splits, and so by base change, $R^{sh} \rightarrow T$ splits.
\end{proof}

\begin{remark}
\label{rem:main-theorem-A-B-equivalent}
Theorem \ref{Theorem-A} $\Rightarrow$ Corollary \ref{cor:splinter-strict-Henselization}, while faithfully flat descent of the splinter property shows that Corollary \ref{cor:splinter-strict-Henselization} $\Rightarrow$ Theorem \ref{Theorem-B} using the faithfully flat map $R^h \rightarrow R^{sh}$. Thus, Theorem \ref{Theorem-A} implies Theorem \ref{Theorem-B}. Since Theorem \ref{Theorem-A} is deduced as a consequence of Theorem \ref{Theorem-B}, this shows that Theorems \ref{Theorem-A} and \ref{Theorem-B} are equivalent.
\end{remark}

The derived version of Theorem \ref{Theorem-A} also holds with some caveats.

\begin{corollary}
\label{cor:derived-Theorem-A}
Let $\varphi: (R,\fm) \rightarrow (S,\fn)$ be an \'etale homomorphism of Noetherian local rings such that $R$ is a derived splinter.
\begin{enumerate}
	\item If $R$ has prime characteristic $p > 0$ or has mixed characteristic $(0,p)$ where $p$ is in the Jacobson radical of $R$, then $S$ is a derived splinter.
	\item If $R$ is essentially of finite type over a field $k$ of characteristic $0$, then $S$ is a derived splinter. 
\end{enumerate}
\end{corollary}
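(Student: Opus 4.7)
The plan is to reduce both parts of the corollary to Theorem \ref{Theorem-A} by invoking characteristic-specific equivalences between the derived splinter condition and other well-studied singularity notions. In each case, the strategy is: pass from derived splinter to some auxiliary property for which ascent under essentially \'etale maps is already known (either the ordinary splinter property via Theorem \ref{Theorem-A}, or the rational singularity property), then translate back.

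For part (1), the first step is to invoke Bhatt's theorem \cite[Theorem 1.4]{Bha12}, which in prime characteristic $p > 0$ asserts that the splinter and derived splinter conditions coincide. In the mixed characteristic case where $p$ lies in the Jacobson radical, the analogous equivalence follows from Bhatt's forthcoming work \cite{Bha} announced in the introduction. Applying this equivalence to $R$ gives that $R$ is a splinter, so by Theorem \ref{Theorem-A}, $S$ is a splinter. Next I would verify that $S$ satisfies the hypotheses of the equivalence in reverse: since $\varphi$ is \'etale, $S$ has the same characteristic as $R$, and since $\varphi$ is local, if $p \in \fm$ then $p \in \fn$, so $p$ lies in the Jacobson radical of $S$. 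One last application of the equivalence upgrades the splinter conclusion for $S$ to the derived splinter conclusion.

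For part (2), the first step is to observe that $S$ remains essentially of finite type over $k$, since it is essentially \'etale over $R$ which is essentially of finite type over $k$. The key identification is then Kov\'acs's theorem \cite{Kov00} (see also \cite[Theorem 2.12]{Bha12}): a Noetherian ring essentially of finite type over a characteristic zero field is a derived splinter if and only if it has rational singularities. Hence $R$ has rational singularities. Since rational singularities are preserved by essentially \'etale (more generally, smooth) morphisms --- a standard fact that follows, e.g., by pulling back a resolution of singularities of $\Spec(R)$ along the smooth morphism $\Spec(S) \to \Spec(R)$ and using flat base change for higher direct images of the structure sheaf --- it follows that $S$ has rational singularities, and a final application of Kov\'acs's theorem gives that $S$ is a derived splinter.

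The main obstacle is not an internal technical difficulty, but rather the reliance on the deep characteristic-specific equivalences cited above. In prime characteristic, Bhatt's \cite[Theorem 1.4]{Bha12} is decidedly nontrivial, and the mixed characteristic analogue \cite{Bha} is only available as forthcoming work; this is why a clean hypothesis such as ``$p$ in the Jacobson radical'' is imposed rather than working in arbitrary mixed characteristic. Once these equivalences are granted, the corollary is essentially formal: it packages Theorem \ref{Theorem-A} and the well-known \'etale-local nature of rational singularities through the available characterizations of derived splinters.
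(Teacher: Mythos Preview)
Your proposal is correct and follows essentially the same route as the paper: for (1) you invoke the splinter/derived splinter equivalence of Bhatt (prime characteristic via \cite[Theorem 1.4]{Bha12}, mixed characteristic via the forthcoming work) and then apply Theorem \ref{Theorem-A}, and for (2) you pass through rational singularities via Kov\'acs's characterization. The only cosmetic difference is that for the ascent of rational singularities under \'etale maps the paper cites Elkik \cite[Th\'eor\`eme 5]{Elk78} (who proves it more generally for essentially smooth maps), whereas you sketch the standard base-change-of-a-resolution argument.
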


\begin{proof}
Since $\varphi$ is an \'etale homomorphism of local rings, $R$ and $S$ both have the same characteristic.

$(1)$ follows in prime characteristic $p > 0$ by \cite[Theorem 1.4]{Bha12} and Theorem \ref{Theorem-A} since splinters and derived splinters coincide in prime characteristic. The same is true in mixed characteristic by forthcoming work of Bhatt and Lurie announced in \cite[Remark 1.7]{Bha20}.

$(2)$ Since $R$ being a derived a splinter is equivalent to it having rational singularities (\cite{Kov00} and \cite[Theorem 2.12]{Bha12}), it suffices for us to show that rational singularities ascend under an \'etale homomorphism of local rings. But Elkik showed, more generally, that rational singularities ascend under essentially smooth homomorphisms of Noetherian local rings \cite[Th\'eor\`eme 5]{Elk78}.
\end{proof}

\begin{remark}
Recently Kov\'acs has generalized the notion of a rational singularity to arbitrary characteristic for excellent normal Cohen--Macaulay schemes that admit a dualizing complex \cite[Definition 1.3]{Kov18}. He shows that if $X$ is such a scheme that is also a derived splinter, then $X$ has rational singularities in this more general sense \cite[Theorem 8.7]{Kov18}. In fact, $X$ is also pseudorational in the sense of Lipman and Teissier \cite{LT81} by \cite[Corollary 9.14]{Kov18}. However, the converse is false, that is, a rational singularity is not always a derived splinter. For example, in prime characteristic there exist finite type graded rings over fields with rational singularities that are not $F$-rational \cite[Example (2.11)]{HW96}. Any such ring cannot be a splinter by Remark \ref{rem:charp-splinters}(4), hence also not a derived splinter by \cite[Theorem 1.4]{Bha12}. 
\end{remark}

\section{Some open questions}
\label{sec:open-questions}

We conclude this paper with some questions that we believe are open for splinters. The first, is the generalization of Theorem \ref{Theorem-A} mentioned in the introduction:

\noindent \textbf{Question 1:} Suppose $\varphi: R \rightarrow S$ is a flat homomorphism of Noetherian rings with geometrically regular fibers. If $R$ is a splinter, is $S$ a splinter?

\begin{remark}
\label{rem:question-1-implies-dsc}
{\*}
\begin{enumerate}
\item Question 1 reduces to the following special case -- 
\begin{center}
\textbf{Question 1$^{\prime}$:} If $R$ is a splinter, is $R[x]$ a splinter? 
\end{center}	
	Indeed, if $R \rightarrow S$ is a regular map, then N\'eron-Popescu desingularization implies that $S$ can be written as a filtered colimit of smooth $R$-algebras \cite{Pop90, Swa98}. Note, however, that the smooth $R$-algebras in the colimit are not necessarily $R$-subalgebras of $S$. If $S \rightarrow T$ is a finite ring map that is surjective on $\Spec$, then by descent of properties of morphisms over projective limits of schemes  \cite[\href{https://stacks.math.columbia.edu/tag/01ZM}{Tags 01ZM}, \href{https://stacks.math.columbia.edu/tag/01ZO}{01ZO} and \href{https://stacks.math.columbia.edu/tag/07RR}{07RR}]{stacks-project}, there exists a model $T_{S'}$ of $T$ over a smooth $R$-algebra $S'$ such that $S' \rightarrow T_{S'}$ is finite and induces a surjection on $\Spec$. To get a splitting of $S \rightarrow T$ it suffices to show that $S' \rightarrow T_{S'}$ splits. Therefore to answer Question 1, we may assume $R \rightarrow S$ is a smooth ring map. However, given $\fq \in \Spec(S)$, in a suitable affine open neighborhood $U$ of $\fq$, the map $U \rightarrow \Spec(R)$ factors as an \'etale map $U \rightarrow \mathbb{A}^n_R$ followed by the canonical projection $\mathbb{A}^n_R \rightarrow \Spec(R)$ \cite[Chapter 2, Remark 12]{BLR90}. Hence by Theorem \ref{Theorem-A} and induction on $n$, it further suffices to show that a polynomial ring over a splinter remains a splinter.

	\item Question $1^{\prime}$ implies the direct summand theorem. In fact, the direct summand theorem is equivalent to Question $1^{\prime}$
when $R$ is excellent and regular. Indeed, by a reduction due to Hochster \cite[Theorem (6.1)]{Hoc83}, it suffices to prove that complete, unramified regular local rings are splinters. In mixed characteristic, any such regular local ring is of the form $V[[x_1,\dots,x_n]]$, where $V$ is a $p$-adically complete (hence also excellent) DVR of mixed characteristic $(0,p)$. 

Consider the factorization
\[
V \rightarrow V[x_1,\dots,x_n]_{(p,x_1,\dots,x_n)} \xrightarrow{(p,x_1,\dots,x_n)-\textrm{adic completion}} V[[x_1,\dots,x_n]].
\]
If we can show that $V[x_1,\dots,x_n]$ is a splinter (which follows if Question $1^{\prime}$ is true), then the localization $V[x_1,\dots,x_n]_{(p,x_1,\dots,x_n)}$ is also a splinter (Lemma \ref{lem:splinters-local}). But  $V[x_1,\dots,x_n]_{(p,x_1,\dots,x_n)}$ is excellent because $V$ is complete, and so, Theorem \ref{Theorem-C} will then imply that its $(p,x_1,\dots,x_n)$-adic completion $V[[x_1,\dots,x_n]]$ is also a splinter. 

	\item Suppose $R$ is a Gorenstein ring of prime characteristic $p > 0$ that is also a $G$-ring. If $R$ is a splinter, then so is $R[x]$. First note that $R[x]$ is also a $G$-ring since the property of being a $G$-ring is preserved under essentially finite type maps \cite[Th\'eor\`eme (7.4.4)]{EGAIV_II}. Let $\fM$ be a maximal ideal of $R[x]$ and $\fp$ denote its contraction to $R$. Since the splinter condition can be checked locally at the closed points (Lemma \ref{lem:local-splinters-domain}), it suffices to show $R[x]_\fM$ is a splinter. Note $R_\fp$ is weakly $F$-regular by Corollary \ref{cor:G-splinters-CM-F-rational} since $R_\fp$ is a splinter and a $G$-ring. Moreover, the local homomorphism $$R_\fp \rightarrow R[x]_\fM$$ is regular, $R[x]_\fM$ is a $G$-ring, and the singular locus of $R[x]_\fM$ is closed by \cite[Expos\'e 1, Proposition 5.5.1(i)]{ILO14}. Thus, $R[x]_\fM$ is also weakly $F$-regular by \cite[Theorem (7.24)]{HH94(a)}. But a weakly $F$-regular ring is always a splinter by Remark \ref{rem:charp-splinters}(2), hence $R[x]_\fM$ is a splinter as desired. 
		
	If $R$ is a locally excellent $\mathbb{Q}$-Gorenstein splinter of prime characteristic $p > 0$, then $R[x]$ is also a splinter. Indeed, $R$ is then \emph{$F$-regular} by \cite[Theorem 1.1]{Sin99} (i.e. all localizations of $R$ are weakly $F$-regular) and a $G$-ring since it is locally excellent. Therefore, by the same reasoning as in the previous paragraph, $R[x]$ is a splinter. A similar argument also applies to $F$-finite rings with finitely generated anti-canonical algebras by \cite{CEMS18}.

\end{enumerate}
\end{remark}

\noindent \textbf{Question 2:} Let $R$ be an excellent ring. Is the locus of primes $\fp \in \Spec(R)$ such that $R_{\fp}$ is a splinter open?

If $R$ is {locally excellent}, but not excellent, then Question 2 has a negative answer by the following general result of Hochster:

\begin{theorem}\cite[Proposition 2]{Hoc73(b)}
\label{thm:non-openness-loci}
Let $\cP$ be a property of Noetherian local rings. Let $k$ be an algebraically closed field, and let $(R,\fm)$ be essentially of finite type over $k$ such that
\begin{enumerate}
	\item $R$ is a domain (hence geometrically integral),
	\item $R/\fm = k$, and
	\item for every field extension $L/k$, the ring $(R \otimes_k L)_{\fm(R \otimes_k L)}$ fails to satisfy $\cP$.
\end{enumerate}
Additionally, suppose any field satisfies $\cP$. For all $n \in \mathbb{N}$, let $R_n$ be a copy of $R$ with maximal ideal $\fm_n = \fm$. Let $R' \coloneqq \bigotimes_{n \in \mathbb{N}} R_n$, where the infinite tensor product is taken over $k$. Then each $\fm_nR'$ is a prime ideal of $R'$. Moreover, if $S = R' \setminus (\bigcup_m \fm_nR')$, then 
\[
T \coloneqq S^{-1}R'
\]
is a Noetherian domain whose locus of primes that satisfy $\cP$ is not open in $\Spec(T)$. Furthermore, each local ring of $T$ is essentially of finite type over appropriate field extensions of $k$, 
hence $T$ is locally excellent.
\end{theorem}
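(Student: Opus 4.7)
The plan is to verify Hochster's construction in four steps: establish that $R'$ is a domain and each $\fm_n R'$ is prime, identify each local ring of $T$ at a maximal ideal with the ring appearing in hypothesis (3), check that $T$ is Noetherian, and conclude with the non-openness of the $\cP$-locus from the identity $\bigcap_n \fM_n = (0)$ in $T$.

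For the structural facts about $R'$, I would use that $R$ is geometrically integral over $k$ (since $R$ is a domain essentially of finite type over the algebraically closed field $k = \bar k$); consequently any finite, and therefore any filtered, tensor product of copies of $R$ over $k$ is a domain, so $R'$ itself is a domain. Moreover $R'/\fm_n R' \cong \bigotimes_{m \neq n} R_m$ is a domain by the same reasoning, proving $\fm_n R'$ is prime. The finite-support observation -- any nonzero $f \in R'$ involves only finitely many tensor factors -- gives that $f$ lies in $\fm_n R'$ for only finitely many $n$, so in particular $\bigcap_n \fm_n R' = (0)$. Localizing at $S$ and using the primality of each $\fm_n R'$ (so that clearing denominators preserves membership), the identity descends to $\bigcap_n \fM_n = (0)$ in $T$, where $\fM_n := \fm_n R' \cdot T$.

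To identify the local rings of $T$, write $R'_{\widehat n} := \bigotimes_{m \neq n} R_m$ and $L_n := \Frac(R'_{\widehat n})$. From the decomposition $R' = R_n \otimes_k R'_{\widehat n}$ and a direct comparison of the two relevant localizations, I would obtain
\[
T_{\fM_n} \;\cong\; R'_{\fm_n R'} \;\cong\; \bigl(R_n \otimes_k L_n\bigr)_{\fm_n(R_n \otimes_k L_n)},
\]
which is essentially of finite type over the field $L_n$. Applying hypothesis (3) to the field extension $L_n/k$ shows that this local ring fails $\cP$. Every prime of $T$ is contained in some $\fM_n$ (or is the generic point), so every local ring of $T$ is a further localization of some $T_{\fM_n}$, hence essentially of finite type over a field, and therefore excellent. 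Noetherianity of $T$ is the main obstacle: the idea is to combine the finite-support observation (every nonunit of $T$ lies in only finitely many $\fM_n$) with the Noetherianity of each $T_{\fM_n}$, reducing finite generation of an arbitrary ideal of $T$ to finite generation in a semilocalization at finitely many of the $\fM_n$, following Hochster's argument.

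Granted these, the non-openness of the $\cP$-locus is short. Fields satisfy $\cP$, so the generic point $(0) \in \Spec T$ lies in the locus. If the locus were open, its complement would equal $V(I)$ for some ideal $I$ with $(0) \notin V(I)$, hence $I \neq 0$. But $V(I)$ would then contain every maximal ideal $\fM_n$, forcing $I \subseteq \bigcap_n \fM_n = (0)$, a contradiction. Therefore the locus of primes of $T$ satisfying $\cP$ is not open in $\Spec T$.
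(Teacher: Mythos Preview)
The paper does not give its own proof of this theorem; it is quoted verbatim as a result of Hochster \cite[Proposition 2]{Hoc73(b)} and is used only as a black box in Example \ref{ex:splinter-locus-not-open}. So there is no ``paper's proof'' to compare your proposal against.

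That said, your sketch is a faithful reconstruction of Hochster's argument and is essentially correct. A couple of minor points you might tighten. First, in your non-openness step you should also note that the $\fM_n$ are precisely the maximal ideals of $T$ (which follows from the construction of $T$ as $S^{-1}R'$ and the computation $R'/\fm_nR' \otimes_{R'_{\widehat n}} L_n = L_n$); otherwise the statement ``every prime of $T$ is contained in some $\fM_n$'' is not justified. Second, your Noetherianity paragraph is the only place where the argument is genuinely compressed: the clean way to finish is to pick $0 \neq f \in I$, observe $f$ lies in only finitely many $\fM_{n_1},\dots,\fM_{n_r}$, choose finitely many elements of $I$ generating each $IT_{\fM_{n_i}}$, and check that the ideal $I'$ generated by $f$ together with these elements satisfies $I'T_{\fM_n} = IT_{\fM_n}$ for every $n$; since the $\fM_n$ exhaust the maximal spectrum of $T$, this forces $I' = I$. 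With these two clarifications your outline is a complete proof.
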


\begin{example}
\label{ex:splinter-locus-not-open}
Let $\cP$ be the property of being a splinter. Note that all fields are splinters. Let $(R,\fm)$ be the local ring of a closed point of an algebraic variety over $k = \overline{k}$ such that $R$ is not a splinter (for example, choose a non-normal $R$). Then for any field extension $L/k$, $(R \otimes_k L)_\fm$  cannot be a splinter because $(R \otimes_k L)_{\fm(R \otimes_k L)}$ is a faithfully flat extension of $R$, and splinters descend under faithfully flat maps (Proposition \ref{prop:splinters-descend-cyc-pur}). Thus, $R$ satisfies requirements $(1)$-$(3)$ of Theorem \ref{thm:non-openness-loci}, and so, the theorem gives us a locally-excellent ring whose splinter locus is not open.
\end{example}

\section{Acknowledgments}
We thank Takumi Murayama, Shravan Patankar, Thomas Polstra, Karl Schwede and Anurag Singh for helpful conversations, for comments on a draft and for their interest. We thank Shravan for pointing out an incorrect citation. We are grateful to Bhargav Bhatt, Mel Hochster and Karen Smith for patiently answering our many questions. We first became aware of some of the questions addressed in this paper during a lecture series taught by Linquan Ma at the University of Illinois at Chicago. Additionally we thank Linquan for helpful discussions pertaining to Remark \ref {weakly-functorial-BCM}. Part of this research was carried out at the birthday conference in honor of Bernd Ulrich at the University of Notre Dame, and we thank the conference organizers for providing a stimulating environment. Finally, we thank the referees for their  suggestions that have improved this paper.

\bibliographystyle{amsalpha}
\footnotesize
\end{document}